\numberwithin{equation}{section}
\def\PP{\mathbb{P}}
\def\RR{\mathbb{R}}
\def\EE{\mathbb{E}}
\def\11{\mathbbm{1}}
\def\E{\mathbb{E}}
\def\P{\mathbb{P}}
\def\R{\mathbb{R}}
\def\N{\mathbb{N}}
\def\d{\partial}
\def\cM{{\cal M}}
\newtheorem{thm}{Theorem}[section]
\newtheorem{lem}[thm]{Lemma}
\newtheorem{prop}[thm]{Proposition}
\theoremstyle{remark}
\newtheorem{rem}{Remark}
\newcommand{\vertiii}[1]{{\left\vert\kern-0.25ex\left\vert\kern-0.25ex\left\vert #1 
    \right\vert\kern-0.25ex\right\vert\kern-0.25ex\right\vert}}
\begin{document}

\title{Stochastic approximation of quasi-stationary distributions for diffusion processes in a bounded domain}

\author{Michel Bena\"im$^1$, Nicolas Champagnat$^{2}$, Denis Villemonais$^{2}$}

\footnotetext[1]{Universit\'e de Neuch\^atel, Switzerland}
\footnotetext[2]{Universit\'e de Lorraine, CNRS, Inria, IECL, F-54000 Nancy, France \\
  E-mail: Nicolas.Champagnat@inria.fr, Denis.Villemonais@univ-lorraine.fr}

\maketitle

\begin{abstract}
  We study a random process with reinforcement, which evolves
  following the dynamics of a given diffusion process in a bounded
  domain and is resampled according to its occupation measure when it
  reaches the boundary. We show that its occupation measure converges
  to the unique quasi-stationary distribution of the diffusion process
  absorbed at the boundary of the domain. Our proofs use recent
  results in the theory of quasi-stationary distributions and
  stochastic approximation techniques.
\end{abstract}

\selectlanguage{french} 

\begin{abstract}
	Nous étudions un processus stochastique avec renforcement, qui évolue suivant une diffusion dans un domaine borné, avec ré-échantillonnage  suivant sa mesure d'occupation lorsqu'il atteint la frontière. Nous montrons que sa mesure d'occupation converge vers l'unique distribution quasi-stationnaire de la diffusion absorbée au bord du domaine. Nos preuves s'appuient sur des résultats récents en théorie des distributions quasi-stationnaires  et sur des techniques d'approximation stochastique.
\end{abstract}

\selectlanguage{english} 

\noindent\textit{Keywords:} random processes with reinforcement, stochastic approximation, pseu\-do-asymptotic trajectories, quasi-stationary distributions.

\medskip\noindent\textit{2010 Mathematics Subject Classification.} Primary: 60B12, 60J60, 60B10, 60F99; Secondary: 60J70.

\section{Introduction}
\label{sec:intro}

Let $(\Omega,({\cal F}_t)_{t\in[0,+\infty)},(X_t)_{t\in [0,+\infty)},(\PP_x)_{x\in E\cup \{\partial\}})$ be a
time homogeneous Markov process with state space $E\cup\{\partial\}$, where $E$ is a measurable space and
 $\d\not\in E$ is an absorbing state for the process.  This means that $X_s=\partial$ implies $X_t=\partial$ for all
$t\geq s$, $\P_x$-almost surely for all $x\in E$ and, in particular,
$$
\tau_\partial:=\inf\{t\geq 0,X_t=\partial\}
$$
is a stopping time. We also assume that
$\PP_x(\tau_\partial<\infty)=1$ and $\PP_x(t<\tau_\partial)>0$ for all
$t\geq 0$ and $\forall x\in E$.

We consider a random process $(Y_t)_{t\geq 0}$ with reinforcement,
which evolves following the dynamic of $X$ when it lies in $E$ and
which is resampled according to its occupation measure when it reaches
$\d$. More precisely, given a probability measure $\mu$ on $E$, we set
$$
Y_t=\sum_{k=1}^\infty \mathbbm{1}_{t\in[\theta_{k-1},\theta_k)}X^{(k)}_{t-\theta_{k-1}},\quad\forall t\geq 0,
$$
where $\theta_0=0$, 
\begin{itemize}
\item $(X^{(1)}_t,t\geq 0)$ is a realization of the process $(X_t,t\geq 0)$ with $X^{(1)}_0\sim\mu$ (i.e.\ under $\PP_\mu$) and
  the stopping time $\theta_1$ is defined as $\theta_1=\tau^{(1)}_\d$ the first hitting time of $\d$ by $X^{(1)}$,
\item given $X^{(1)}$, $(X^{(2)}_t,t\geq 0)$ is a realization of the process $(X_t,t\geq 0)$ with $X^{(2)}_0\sim\mu_{\theta_1}$,
  where
  $$
  \mu_{\theta_1}=\frac{1}{\theta_1}\int_0^{\theta_1}\delta_{Y_s}\,ds
  $$
  and $\theta_2-\theta_1=\tau^{(2)}_\d$ the first hitting time of $\d$ by $X^{(2)}$,
\item for all $k\geq 1$, given $X^{(1)},X^{(2)},\ldots,X^{(k)}$ ,$(X^{(k+1)}_t,t\geq 0)$ is a realization of the process $(X_t,t\geq
  0)$ with $X^{(k+1)}_0\sim\mu_{\theta_k}$,
  where
  $$
  \mu_{\theta_k}=\frac{1}{\theta_k}\int_0^{\theta_k}\delta_{Y_s}\,ds
  $$
  and $\theta_{k+1}-\theta_k=\tau^{(k+1)}_\d$ the first hitting time of $\d$ by $X^{(k+1)}$.
\end{itemize}
We also define for all $t\geq 0$
$$
\mu_t=\frac{1}{t}\int_0^t\delta_{Y_s}\,ds,\quad\text{i.e.}\quad \mu_t(f)=\frac{1}{t}\int_0^t f(Y_s)\,ds,\quad\forall
f\in\mathcal{B}_b(E).
$$


This process has been studied in several situations, with the main
goal of proving an almost sure convergence result for the occupation
measure $\mu_t$ when $t\rightarrow+\infty$. In the finite state space
case and in a discrete time setting, Aldous, Flannery and
Palacios~\cite{AldousFlanneryEtAl1988} solved this problem by showing
that the proportion of colours in a P\'olya urn type process converges
almost surely to the left eigenfunction of the replacement matrix,
which was also identified as the quasi-stationary distribution of a
corresponding Markov chain (we refer the reader to the
surveys~\cite{MV12,vanDoornPollett2013} and to the
book~\cite{ColletMartinezSanMartin} for general references on
quasi-stationary distributions; basic facts and useful results on
quasi-stationary distributions are also reminded in
Section~\ref{sec:main-discrete-time}). Under a similar setting but
using stochastic approximation techniques, Bena\"im and
Cloez~\cite{BenaimCloez2015} and Blanchet, Glynn and
Zheng\cite{BlanchetGlynnEtAl2016} independently proved the almost sure
convergence of the occupation measure $\mu_t$ toward the
quasi-stationary distribution of $X$. These works have since been
generalized to the compact state space case by Bena\"im, Cloez and
Panloup~\cite{BenaimCloezEtAl2017} under general criteria for the
existence of a quasi-stationary distribution for $X$. Continuous time
diffusion processes with smooth bounded killing rate on compact Riemanian
manifolds have been recently considered by Wang, Roberts and
Steinsaltz~\cite{WangRobertsEtAl2020}, who show that a similar
algorithm with weights also converges toward the quasi-stationary
distribution of the underlying diffusion process. Recently, Mailler
and Villemonais~\cite{MaillerVillemonais2018} have proved such a
convergence result for processes with smooth and bounded killing rate
evolving in non-compact (more precisely unbounded) spaces using a
measure-valued P\'olya process representation of this reinforced
algorithm.

The aim of the present paper is to solve the question of convergence
of the occupation measure toward the quasi-stationary distribution of
$X$ when this process is a uniformly elliptic diffusion evolving in an
open bounded connected open set $D$ with $C^2$ boundary $\d D$, with
hard killing when the process hits the boundary. This answers
positively the open problem stated in Section~8
of~\cite{BenaimCloezEtAl2017}. Note that the difficulty is twofold:
firstly, the state space $E=D$ is an open domain in $\R^d$ and is thus
non-compact; secondly, the absorption occurs through killing at the
boundary, which corresponds to an infinite killing rate.

Our main assumptions concern the $C^2$ regularity of the domain and of the
parameters of the diffusion $X$. They are satisfied in particular if the coefficients of the
stochastic differential equation satisfied by $X$ are H\"older
continuous. Our assumptions ensure the existence of a unique
quasi-stationary distribution $\alpha$ for $X$ and allows us to prove
the almost sure convergence of the occupation measure
$(\mu_t)_{t\geq 0}$ toward $\alpha$.  
As in~\cite{Benaim1999,BenaimCloez2015,WangRobertsEtAl2020}, we  make use of stochastic approximation techniques (in the sense of~\cite{Benaim1999,BenaiemHirsch1996}). These works strongly rely on the proof techniques of~\cite{MetivierPriouret1987}, which are based on technical regularity results that do not adapt well to the present setting. Our proof uses instead 
recent advances in the theory of quasi-stationary distributions~\cite{ChampagnatVillemonais2016} together with coupling arguments, in
order to prove that the occupation measure dynamics are globally asymptotically stable. Combined with the general results on
asymptotic pseudo trajectories of~\cite{Benaim1999,BenaiemHirsch1996}, this entails the almost sure convergence of the occupation
measure. 

The paper is organised as follows. In
Section~\ref{sec:self-interacting}, we state our main assumptions and
results. In Section~\ref{sec:main-discrete-time}, we gather useful
general results on quasi-sta\-tio\-na\-ry distributions
from~\cite{ChampagnatVillemonais2016,ChampagnatVillemonaisEtAl2017} and
prove new general results on a key operator $A$, which has its own
interest and should be useful for future adaptation of the methods
developed below. Section~\ref{sec:proof} is devoted to the proof of
our main result, which consists in checking that the occupation measure
of the resampling points is (up to a time change and linearization) an
asymptotic pseudo-trajectory of a measure-valued dynamical system related to the operator $A$ (we refer the reader to~\cite{Benaim1999} 
for an introduction to asymptotic pseudo-trajectories and their use in
stochastic approximation theory).

\section{Main result}
\label{sec:self-interacting}

From now on, we consider a diffusion process $(X_t,t\geq 0)$ in a connected bounded open set $D$ of $\RR^d$, $d\geq 2$ with $C^2$
boundary $\d D$ and absorbed at $\d D$. We assume that $X$ is solution to the SDE
\begin{equation}
  \label{eq:SDE}
  dX_t=\sigma(X_t)dB_t+b(X_t)dt,
\end{equation}
where $(B_t,t\geq 0)$ is a
$r$-dimensional Brownian motion, $b:D\rightarrow\R^d$ is bounded and continuous and $\sigma:D\rightarrow \R^{d\times r}$ is
continuous, $\sigma\sigma^*$ is uniformly elliptic and for all $\rho>0$,
\begin{align}
  \sup_{x,y\in D,\ |x-y|=\rho}\frac{|\sigma(x)-\sigma(y)|^2}{\rho}\leq g(\rho)
  \label{eq:hyp-sigma-priola-wang} 
\end{align}
for some function $g$ such that $\int_0^1 g(r)dr<\infty$. Note that, in this case, the process $(Y_t,\mu_t)_{t\geq 0}$ described in the introduction is
well-defined since one can prove that $\theta_k\rightarrow+\infty$
a.s.~\cite[Lemma 8.1]{BenaimCloezEtAl2017}.

In~\cite[Section~5.3]{ChampagnatCoulibalyEtAl2018}, it was proved
that, under the above regularity assumptions, the killed diffusion
process $X$ admits a unique quasi-stationary distribution,
\textit{i.e.} a probability measure $\alpha$ on $D$ such that
\begin{align*}
  \alpha=\P_\alpha(X_t\in\cdot\mid t<\tau_\d),\ \forall t\geq 0,
\end{align*}
where $\tau_\d$ denotes the hitting time of $\d D$ by the
process. {Moreover, it is well known that, in this case, there exists a
positive constant $\lambda_0$ such that
$\P_\alpha(t<\tau_\d)=\exp(-\lambda_0 t)$ for all $t\geq 0$ (see
Section~\ref{sec:main-discrete-time} for more results on
quasi-stationary distributions).}

\begin{rem}
  \label{rem:expo-cv}
  In fact, the result
  of~\cite[Section~5.3]{ChampagnatCoulibalyEtAl2018} is stronger and
  entails the exponential convergence in total variation norm of the
  conditional law of $X$ toward $\alpha$, uniformly in the initial
  distribution. The proof relies on the fact that Conditions~(A1)
  and~(A2) as enunciated in the next section are satisfied by the
  process $X$ (see Section~\ref{sec:main-discrete-time} for details
  and additional properties).
\end{rem}

\begin{rem}
  \label{rem:manifold-and-1d}
  This last property was also proved to hold true for general
  one-di\-men\-sio\-nal diffusions in $D=[a,+\infty)$ or $D=[a,b]$ absorbed
  at the boundary of $D$ and coming down from infinity
  in~\cite{ChampagnatVillemonais2015} and for diffusion processes $X$
  in compact, connected $C^2$ manifolds $M$ with $C^2$ boundary $\d M$
  absorbed at $\d M$ when the infinitesimal generator of $X$ is given
  by $L=\frac{1}{2}\Delta+Z$, where $\Delta$ is the Laplace-Beltrami
  operator and $Z$ is a $C^1$ vector field
  in~\cite{ChampagnatCoulibalyEtAl2018}. All the results of this
  paper, and in particular the next one, can be extended to these two
  situations.
\end{rem}

The main result of this article is the following one.

\begin{thm}
  \label{thm:main-d>1}
  For all bounded measurable function $f:D\rightarrow \R$, one has 
  \[
  \mu_t f\xrightarrow[t\rightarrow+\infty]{}\alpha f\quad\text{a.s.}
  \]
 Moreover, $\theta_n/n\to 1/\lambda_0$ almost surely when $n\rightarrow+\infty$.
\end{thm}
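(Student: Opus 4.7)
The plan is to cast the sequence of resampled occupation measures as a stochastic approximation in the space of probability measures on $D$, whose limiting ODE has $\alpha$ as its unique globally attracting equilibrium, and then invoke the asymptotic pseudo-trajectory theory of Bena\"im--Hirsch~\cite{Benaim1999,BenaiemHirsch1996}.

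First, I would write the exact recursion at resampling times. Setting $\nu_n:=\mu_{\theta_n}$, $\Delta_{n+1}:=\theta_{n+1}-\theta_n=\tau^{(n+1)}_\d$ and denoting by $\bar\mu_{n+1}:=\Delta_{n+1}^{-1}\int_{\theta_n}^{\theta_{n+1}}\delta_{Y_s}\,ds$ the normalised occupation measure during the $(n{+}1)$-th excursion, one gets
\begin{equation*}
  \nu_{n+1}=\nu_n+\gamma_{n+1}\bigl(\bar\mu_{n+1}-\nu_n\bigr),\qquad \gamma_{n+1}=\frac{\Delta_{n+1}}{\theta_{n+1}}.
\end{equation*}
Conditionally on $\mathcal F_{\theta_n}$, the excursion started from $\nu_n$ has conditional law-of-$\bar\mu_{n+1}$-cum-$\Delta_{n+1}$ driven by the Markov kernel of the killed diffusion. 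I would therefore decompose $\bar\mu_{n+1}-\nu_n = (A\nu_n-\nu_n) + \xi_{n+1}$, where
\begin{equation*}
  A\nu:=\frac{\EE_\nu\!\left[\int_0^{\tau_\d}\delta_{X_s}\,ds\right]}{\EE_\nu[\tau_\d]}
\end{equation*}
is the operator whose properties (boundedness, Lipschitz continuity in total variation and the fixed-point identity $A\alpha=\alpha$) are precisely those developed in Section~\ref{sec:main-discrete-time}. The martingale noise $\xi_{n+1}$ and the fluctuations of $\gamma_{n+1}$ around their conditional expectations must then be controlled: using the exponential moments of $\tau_\d$ uniform in the starting measure (a consequence of (A1)--(A2)), one can show $\theta_n\to\infty$ at least linearly in $n$, hence $\sum \gamma_n^2<\infty$, $\sum\gamma_n=\infty$, and a law of large numbers / Burkholder-type estimate handles $\xi_{n+1}$.

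Next I would apply the standard time change $\tau_n=\sum_{k\le n}\gamma_k\sim\log\theta_n$ and show that the piecewise-constant interpolation of $(\nu_n)$ is an asymptotic pseudo-trajectory of the measure-valued semiflow induced by
\begin{equation*}
  \dot\nu_t=A\nu_t-\nu_t.
\end{equation*}
The fixed-point identity reads $A\alpha=\alpha$ because, for the QSD, $\EE_\alpha[\int_0^{\tau_\d}\delta_{X_s}\,ds]=\int_0^\infty e^{-\lambda_0 t}\alpha\,dt=\alpha/\lambda_0$ and $\EE_\alpha[\tau_\d]=1/\lambda_0$. To obtain global asymptotic stability of $\alpha$ I would combine the uniform exponential convergence of the conditional law $\P_\mu(X_t\in\cdot\mid t<\tau_\d)$ to $\alpha$ (Remark~\ref{rem:expo-cv}) with the explicit form of $A$: writing $A\nu$ as a weighted integral over $t$ of the $t$-conditional distributions, a coupling / contraction argument in total variation gives $\|A\nu-\alpha\|_{TV}\le \rho\|\nu-\alpha\|_{TV}$ for some $\rho<1$, so that $V(\nu):=\|\nu-\alpha\|_{TV}$ is a strict Lyapunov function along the flow. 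The APT theorem of~\cite{Benaim1999} then yields $\nu_n\to\alpha$ almost surely.

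Finally I would upgrade convergence from the discrete times $\theta_n$ to continuous time and derive the law of large numbers for $\theta_n$. Since $\mu_t = \frac{\theta_n}{t}\mu_{\theta_n}+\frac{1}{t}\int_{\theta_n}^t\delta_{Y_s}\,ds$ on $[\theta_n,\theta_{n+1})$, and the remainder is bounded by $\Delta_{n+1}/\theta_n\to 0$ a.s. thanks to the uniform exponential moments of $\Delta_{n+1}$, we get $\mu_t f\to \alpha f$ a.s. For the second claim, conditional on $\mathcal F_{\theta_n}$, $\EE[\Delta_{n+1}\mid\mathcal F_{\theta_n}]=\EE_{\nu_n}[\tau_\d]\to\EE_\alpha[\tau_\d]=1/\lambda_0$; combined with a Borel--Cantelli / martingale argument using uniform exponential moments, this gives $\theta_n/n\to 1/\lambda_0$ a.s. The main obstacle is the global asymptotic stability step: establishing uniform (in $\nu$) total-variation contraction for $A$ requires carefully exploiting the QSD convergence together with bounds on $\EE_\nu[\tau_\d]$ uniform over all probability measures $\nu$ on $D$, which is exactly where the machinery of Section~\ref{sec:main-discrete-time} and the coupling arguments are critical.
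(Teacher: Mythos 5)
Your overall philosophy (stochastic approximation plus asymptotic pseudo-trajectories, with global asymptotic stability coming from the exponential QSD convergence) matches the paper's, but the concrete implementation you propose has several genuine gaps, and the paper's proof takes a materially different route that avoids them.

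First, the state variable is wrong. You run the stochastic-approximation recursion directly on the occupation measures $\nu_n=\mu_{\theta_n}$, giving $\nu_{n+1}=\nu_n+\gamma_{n+1}(\bar\mu_{n+1}-\nu_n)$ with $\gamma_{n+1}=\Delta_{n+1}/\theta_{n+1}$. This step size is \emph{not} predictable: $\Delta_{n+1}$ is the length of the $(n{+}1)$-st excursion, which is not $\mathcal F_{\theta_n}$-measurable, so the standard Robbins--Monro/APT machinery (which needs the gain to be known at the time the step is taken) does not apply as stated. The paper's key device is to switch to the empirical measure of the resampling points, $\eta_n=\frac1n\sum_{i\le n}\delta_{Z_i}$ with $Z_i=Y_{\theta_i}$; then $\eta_{n+1}-\eta_n=\frac{1}{n+1}(\delta_{Z_{n+1}}-\eta_n)$, and after the predictable reparametrisation $\gamma_{n+1}=\big((n+1)\,\eta_n A\mathbbm 1_D\big)^{-1}$ the increment takes the clean form $\gamma_{n+1}\big(F(\eta_n)+U_{n+1}\big)$. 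The convergence of $\mu_{\theta_n}$ and of $\theta_n/n$ is then \emph{derived afterwards} from $\eta_n\to\alpha$ via a separate $L^2$-martingale argument, in the opposite logical order from what you propose.

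Second, your martingale decomposition is incorrect. You define $A\nu=\EE_\nu\big[\int_0^{\tau_\d}\delta_{X_s}\,ds\big]\big/\EE_\nu[\tau_\d]$ and claim that $\xi_{n+1}:=\bar\mu_{n+1}-A\nu_n$ is centred conditionally on $\mathcal F_{\theta_n}$. It is not: $\EE\big[\bar\mu_{n+1}(f)\,\big|\,\mathcal F_{\theta_n}\big]=\EE_{\nu_n}\!\big[\tau_\d^{-1}\int_0^{\tau_\d}f(X_s)\,ds\big]$ is the expectation of a ratio, not the ratio of expectations, so $\EE[\xi_{n+1}\,|\,\mathcal F_{\theta_n}]\neq 0$ in general, and the "mean field" of your recursion is not the vector field you wrote down. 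Note also that the operator $A$ in the paper is the \emph{unnormalised} Green operator $Af(x)=\EE_x\big[\int_0^{\tau_\d}f(X_s)\,ds\big]$; the normalisation is carried by the time change and by the subtraction in $F(\nu)=\nu A-(\nu A\mathbbm 1_D)\,\nu$.

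Third, the claimed uniform one-step total-variation contraction $\|A\nu-\alpha\|_{TV}\le\rho\|\nu-\alpha\|_{TV}$ with $\rho<1$, uniformly over all $\nu\in\mathcal M_1(D)$, is not something the paper establishes, and it is almost certainly false for hard killing at the boundary: the paper's Proposition~\ref{prop:prop-A} gives $\big\|\tfrac{\mu A^n}{\mu A^n\mathbbm 1}-\alpha\big\|_{TV}\le\frac{B}{\mu(\eta)}\big(\tfrac{\lambda_0}{\lambda_0+\gamma}\big)^n$, with a factor $1/\mu(\eta)$ that blows up as $\mu$ concentrates near $\partial D$. This is precisely why the tightness results (Propositions~\ref{prop:tightness1} and~\ref{prop:tightness2}) are an unavoidable ingredient: they guarantee that the trajectory stays in a region where $\mu(\eta)$ is bounded below, so that $\{\alpha\}$ is an attractor \emph{on the relevant compact set} rather than uniformly over all of $\mathcal M_1(D)$. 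Your sketch never addresses this and relies on a contraction that is unavailable.

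These three points (tracking $\mu_{\theta_n}$ instead of $\eta_n$, the flawed martingale decomposition, and the false uniform contraction) are substantive; they are exactly the obstacles that earlier approaches could not overcome in the hard-killing setting, and the paper's contribution is to circumvent them. Your closing remarks about upgrading to continuous $t$ and about $\theta_n/n\to1/\lambda_0$ are essentially in the spirit of Subsection~\ref{sec:end-pf}, but they sit downstream of a convergence result ($\eta_n\to\alpha$) that your proposal does not actually obtain.
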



\section{Properties of the {Green operator}}
\label{sec:main-discrete-time}

{The results of Subsection~\ref{subsec:gp} are valid for general
absorbed Markov processes, not only for diffusion processes absorbed
at the boundary of a domain. In Subsection~\ref{sec:dyn-syst}, we
provide properties on the measure-valued dynamical system induced by
the Green operator of the process. Although not specific to diffusion
processes, the later part uses the fact that the semi-group of the
underlying process is Lipschitz regular.

\subsection{General properties}}
\label{subsec:gp}

Let us consider in this section a Markov process $(X_t,t\geq 0)$ on a measurable space $E\cup\{\partial\}$, absorbed in $\d$ at
time
$$
\tau_\partial:=\inf\{t\geq 0,X_t=\partial\},
$$
assumed a.s.\ finite. We also assume that $\PP_x(t<\tau_\partial)>0$ for all $t\geq 0$ and all $x\in E$.

A probability measure $\alpha$ on $E$ is called a \textit{quasi-stationary distribution} if
$$
\PP_\alpha(X_t\in\cdot\mid t<\tau_\d)=\alpha,\quad\forall t\geq 0.
$$
It is well known that a probability measure $\alpha$ is a quasi-stationary distribution if and only if there exists a probability measure $\mu$ on $E$ such that
\begin{align}
\label{eq:conv}
\lim_{t\rightarrow+\infty} \PP_\mu(X_t\in A\mid t<\tau_\d)=\alpha(A)
\end{align}
for all measurable subsets $A$ of $E$. The fact that $\alpha$ is a quasi-stationary distribution also implies the existence of a constant $\lambda_0>0$ such that 
\begin{align}
\label{eq:lambda_0}
\PP_\alpha(t<\tau_\d)=e^{-\lambda_0 t}.
\end{align}

In~\cite{ChampagnatVillemonais2016}, the authors provide a necessary and sufficient condition on $X$ for the existence of a
probability measure $\alpha$ on $E$ and constants $C,\gamma>0$ such that
\begin{equation}
  \label{eq:conv-exp}
  \left\|\PP_\mu(X_t\in\cdot\mid t<\tau_\d)-\alpha\right\|_{TV}\leq C e^{-\gamma t},\quad\forall \mu\in\mathcal{M}_1(E),\quad t\geq 0,
\end{equation}
where $\mathcal{M}_1(E)$ is the set of probability measures on $E$ and $\|\cdot\|_{TV}$ is the total variation norm defined as
$\|\mu_1-\mu_2\|_{TV}=\sup_{f\in\mathcal{B}_b(E),\,\|f\|_\infty\leq 1}|\mu_1(f)-\mu_2(f)|$ for all $\mu_1,\mu_2\in\mathcal{M}_1(E)$,
where $\mathcal{B}_b(E)$ is the set of bounded measurable functions on $E$. This immediately implies that $\alpha$ is the unique
quasi-stationary distribution of $X$ and that~\eqref{eq:conv} holds for any initial probability measure $\mu$.

The necessary and sufficient condition for~\eqref{eq:conv-exp} is given by the existence of a probability measure $\nu$ on $E$ and of
constants $t_0,c_1,c_2>0$ such that
\begin{equation}
  \PP_x(X_{t_0}\in \cdot\mid t_0<\tau_\d)\geq c_1\nu,\quad\forall x\in E  \tag{A1}\label{eq:A1}
\end{equation}
and
\begin{equation}
  \PP_\nu(t<\tau_\d)\geq c_2\PP_x(t<\tau_\d),\quad\forall t\geq 0,\ x\in E.  \tag{A2}\label{eq:A2}
\end{equation}

Under Conditions~(A1) and~(A2), it follows from the general results of~\cite[Prop.\,2.3]{ChampagnatVillemonais2016} that there exists a bounded
function $\eta:E\rightarrow[0,\infty)$ such that $\alpha(\eta)=1$ and, for all $x\in E$ and all $t\geq 0$,
\begin{align}
\label{eq:eta-unif-conv}
\left|e^{\lambda_0 t}\PP_x(t<\tau_\d)-\eta(x)\right|\leq C' e^{-\gamma t}.
\end{align}
In the case of diffusion processes, $\eta$ is a nonnegative solution to $L\eta=-\lambda_0\eta$ where $L$ is the infinitesimal
generator of the process $X$ in the set of bounded measurable functions equiped with the $L^\infty$ norm. The constant $\gamma$ is
the same as in~\eqref{eq:conv-exp}. In particular, there exists a constant $C''$ such that
\begin{align}
\label{eq:eta-borne-unif}
\PP_x(t<\tau_\d)\leq C'' e^{-\lambda_0 t},\quad\forall t\geq 0,\ \forall x\in E.
\end{align}
One can actually obtain a better bound combining Theorem~2.1 and Equation~(3.2) of~\cite{ChampagnatVillemonaisEtAl2017}: there exists a
time $t_1>0$ and a constant $D$ such that, for all $t\geq t_1$ and all $x\in E$,
\begin{align}
  \label{eq:eta-better-bound}
  \left|e^{\lambda_0 t}\PP_x(t<\tau_\d)-\eta(x)\right|\leq D\eta(x) e^{-\gamma t}.
\end{align}
We may---and will---assume without loss of generality that $De^{-\gamma t_1}\leq 1/2$.

We denote by $P_t$ the (nonconservative) semigroup of the Markov process $(X_t,t\geq 0)$, acting on the set $\mathcal{B}_b(E)$ of
bounded measurable functions on $E$ and defined for all such function $f$ by
$$
P_tf(x)=\EE_x[f(X_t)\mathbbm{1}_{t<\tau_\d}],\quad\forall x\in E.
$$
Note that we made here the slight abuse of notation that $f(\d)\cdot 0=0$. Because of~\eqref{eq:eta-borne-unif}, we can define the
{Green} operator $A$ on $\mathcal{B}_b(E)$ as
\begin{equation}
  \label{eq:A}
Af(x)=\EE_x\left[\int_0^{\tau_\d}f(X_s)\,ds\right]=\int_0^\infty P_sf(x)\,ds
\end{equation}
and this operator is bounded on $\mathcal{B}_b(E)$ equiped with the $L^\infty$ norm. Let $\mathcal{M}_1(E)$ be the set of probability
measures on $E$. For all $\mu\in\mathcal{M}_1(E)$, we also define the notation
$$
\mu Af=\int_E Af(x)\,\mu(dx)=\EE_\mu\left[\int_0^{\tau_\d}f(X_s)\,ds\right]=\int_0^\infty \mu P_sf\,ds,
$$
so that in particular $Af(x)=\delta_x Af$ and $\alpha A f=\int_0^\infty e^{-\lambda_0
  t}\alpha f\,dt=\alpha f/\lambda_0$. Since $A$ is bounded,
the operator $e^{tA}$ is well-defined for all $t\geq 0$.

\begin{prop}
  \label{prop:prop-A}
Assume that Conditions~(A1) and~(A2) are satisfied. Then, for all $\mu\in\mathcal{M}_1(E)$, all $f\in \mathcal{B}_b(E)$ and all $n\geq 1$, we have
  \begin{equation}
    \label{eq:prop-A-1}
    \left|\mu A^n f-\frac{\alpha(f)\mu(\eta)}{\lambda_0^n}\right|\leq \frac{(CC''+C')\|f\|_\infty}{(\lambda_0+\gamma)^n},    
  \end{equation}
  where the constants $C,C',C''$ and $\gamma$ are those involved in~\eqref{eq:conv-exp},~\eqref{eq:eta-unif-conv}
  and~\eqref{eq:eta-borne-unif}. We also have for some constant $B$
  \begin{equation}
    \label{eq:prop-A-2}
    \left\|\frac{\mu A^n}{\mu A^n\mathbbm{1}}-\alpha\right\|_{TV}\leq \frac{B}{\mu(\eta)}\left(\frac{\lambda_0}{\lambda_0+\gamma}\right)^n
  \end{equation}
  and for all $t\geq 0$,
  \begin{equation}
    \label{eq:prop-A-3}
    \left\|\frac{\mu e^{tA}}{\mu e^{tA}\mathbbm{1}}-\alpha\right\|_{TV}\leq \frac{B}{\mu(\eta)} e^{-t\,\frac{\gamma}{\lambda_0(\lambda_0+\gamma)}}.
  \end{equation}
\end{prop}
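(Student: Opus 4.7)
The backbone of the plan is the identity
\[
  A^n f(x) = \int_0^\infty \frac{u^{n-1}}{(n-1)!}\,P_u f(x)\,du,\qquad n\geq 1,
\]
which I would establish by induction from $Af=\int_0^\infty P_s f\,ds$ and the semigroup property $P_s P_t = P_{s+t}$ via Fubini. Combined with the series definition $e^{tA}=\sum_n t^n A^n/n!$, it reduces everything in the proposition to pointwise-in-$u$ estimates on $\mu P_u f$ integrated either against a gamma density or against its exponential generating series.

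The core pointwise estimate is
\[
  \bigl|\mu P_u f - \alpha(f)\mu(\eta)e^{-\lambda_0 u}\bigr| \leq (CC''+C')\|f\|_\infty\,e^{-(\lambda_0+\gamma)u},
\]
which I would obtain by writing $\mu P_u f = \mu P_u\mathbbm{1}\cdot[\mu P_u f/\mu P_u\mathbbm{1}]$ and triangulating: the deviation of the conditional expectation from $\alpha(f)$ is controlled by~\eqref{eq:conv-exp}, the bound $\mu P_u\mathbbm{1}\leq C''e^{-\lambda_0 u}$ comes from~\eqref{eq:eta-borne-unif}, and the deviation $|\mu P_u\mathbbm{1}-\mu(\eta)e^{-\lambda_0 u}|\leq C'e^{-(\lambda_0+\gamma)u}$ comes from integrating~\eqref{eq:eta-unif-conv} against $\mu$. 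Then~\eqref{eq:prop-A-1} follows by integrating this pointwise bound against $u^{n-1}/(n-1)!$, using $\int_0^\infty u^{n-1}e^{-\lambda u}/(n-1)!\,du = \lambda^{-n}$ with $\lambda=\lambda_0$ for the main term and $\lambda=\lambda_0+\gamma$ for the error.

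For~\eqref{eq:prop-A-2} and~\eqref{eq:prop-A-3} the treatment is parallel. Set $g:=f-\alpha(f)\mathbbm{1}$, so that $\alpha(g)=0$ and $\|g\|_\infty\leq 2\|f\|_\infty$. Applying~\eqref{eq:prop-A-1} to $g$ collapses the main term and gives $|\mu A^n g|\leq (CC''+C')\|g\|_\infty/(\lambda_0+\gamma)^n$; summing $t^n/n!$ yields $|\mu e^{tA}g|\leq (CC''+C')\|g\|_\infty\,e^{t/(\lambda_0+\gamma)}$. Applying~\eqref{eq:prop-A-1} to $\mathbbm{1}$ gives $\mu A^n\mathbbm{1}\geq \mu(\eta)/(2\lambda_0^n)$ as soon as $(\lambda_0/(\lambda_0+\gamma))^n\leq \mu(\eta)/[2(CC''+C')]$, and analogously $\mu e^{tA}\mathbbm{1}\geq \mu(\eta)e^{t/\lambda_0}/2$ for $t$ past a $\mu(\eta)$-dependent threshold. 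Since
\[
  \frac{\mu A^n f}{\mu A^n\mathbbm{1}} - \alpha(f) = \frac{\mu A^n g}{\mu A^n\mathbbm{1}}
\]
and the analogue holds for $e^{tA}$, dividing the respective numerator and denominator bounds produces the geometric rate $\lambda_0/(\lambda_0+\gamma)$ in~\eqref{eq:prop-A-2} and the exponential rate $\gamma/[\lambda_0(\lambda_0+\gamma)]$ in~\eqref{eq:prop-A-3}, each with the announced $1/\mu(\eta)$ prefactor.

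The only mild obstacle is exactly this uniform lower bound on the normaliser: the leading term in~\eqref{eq:prop-A-1} dominates the error only past an $n$- or $t$-threshold determined by $\mu(\eta)$, which forces a small/large case split. In the small-parameter regime, the trivial bound $\|\cdot\|_{TV}\leq 2$ closes the gap, because by definition of the threshold the quantity $\mu(\eta)/(\lambda_0/(\lambda_0+\gamma))^n$ (and its continuous-time analogue $\mu(\eta)\,e^{-t\gamma/[\lambda_0(\lambda_0+\gamma)]}$) stays bounded by $2(CC''+C')$ in that range, so a single universal constant $B$ absorbs both regimes.
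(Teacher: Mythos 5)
Your proof is correct, and it takes a genuinely different route from the paper at one key point. The identity $\mu A^n f=\int_0^\infty \frac{u^{n-1}}{(n-1)!}\mu P_u f\,du$, the pointwise estimate on $\mu P_u f$, and the integration against the gamma density are exactly as in the paper, so~\eqref{eq:prop-A-1} is obtained the same way. The divergence is in the lower bound on the normaliser $\mu A^n\mathbbm{1}$ (resp.\ $\mu e^{tA}\mathbbm{1}$). The paper does not derive this from~\eqref{eq:prop-A-1}: it instead invokes the sharper, multiplicative estimate~\eqref{eq:eta-better-bound} to bound $\mu P_u\mathbbm{1}\geq \tfrac{1}{2}\mu(\eta)e^{-\lambda_0 u}$ for $u\geq t_1$, yielding $\mu A^n\mathbbm{1}\geq \tfrac{\mu(\eta)e^{-\lambda_0 t_1}}{2\lambda_0^n}$ for \emph{every} $n\geq 1$, with no threshold and no case split. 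You instead bootstrap the lower bound from~\eqref{eq:prop-A-1} applied to $\mathbbm{1}$, which only beats the error term once $(\lambda_0/(\lambda_0+\gamma))^n\leq \mu(\eta)/[2(CC''+C')]$, and you close the remaining small-$n$ (resp.\ small-$t$) window with the trivial bound $\|\cdot\|_{TV}\leq 2$; the arithmetic you sketch does confirm that a single $B$ (e.g.\ $B=4(CC''+C')$) covers both regimes. Your argument is thus more self-contained — it never touches~\eqref{eq:eta-better-bound} — at the modest price of a case split, whereas the paper's route gives a clean uniform lower bound directly. A second, smaller cosmetic difference: your centering $g=f-\alpha(f)\mathbbm{1}$ with $\alpha(g)=0$, so that $\frac{\mu A^n f}{\mu A^n\mathbbm{1}}-\alpha(f)=\frac{\mu A^n g}{\mu A^n\mathbbm{1}}$, is a slightly tidier way to reach the ratio estimate than the paper's two-term triangle inequality, though the numerical content is the same.
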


\begin{proof}
  We first check by induction that for all $n\geq 1$,
  \begin{equation}
    \label{eq:proof-prop-A-1}
    \mu A^n f=\int_0^\infty \frac{u^{n-1}}{(n-1)!}\mu P_u f\,du.
  \end{equation}
  This is of course true for $n=1$. Assuming it is true for a given $n\geq 1$, we have
  \begin{align*}
    \mu A^{n+1} f & =\int_0^\infty \mu P_s A^n f\,ds \\ & =\int_0^\infty \int_0^\infty \frac{t^{n-1}}{(n-1)!}\mu P_sP_t f\,dt\,ds
    \\ & =\int_0^\infty \mu P_u f \int_0^u \frac{t^{n-1}}{(n-1)!}\,dt\,du \\ & =\int_0^\infty \frac{u^{n}}{n!}\mu P_u f\,du,
  \end{align*}
  which concludes the induction. Then, it follows from~\eqref{eq:conv-exp},~\eqref{eq:eta-unif-conv} and~\eqref{eq:eta-borne-unif}
  that
  \begin{multline*}
    \left| \mu A^n f-\int_0^\infty \frac{u^{n-1}}{(n-1)!}\alpha(f) e^{-\lambda_0 u}\mu(\eta)\,du\right| \\
    \begin{aligned}
      & \leq \int_0^\infty \frac{u^{n-1}}{(n-1)!}\left|\mu P_u f-\alpha(f) e^{-\lambda_0 u}\mu(\eta)\right|\,du \\
      & \leq \int_0^\infty \frac{u^{n-1}}{(n-1)!}\left[\mu P_u\mathbbm{1}\left|\frac{\mu P_u f}{\mu
            P_u\mathbbm{1}}-\alpha(f)\right|+\alpha(f)\left|\mu
          P_u\mathbbm{1}-e^{-\lambda_0 u}\mu(\eta)\right|\right]\,du \\
      & \leq (CC''+C')\|f\|_\infty \int_0^\infty \frac{u^{n-1}}{(n-1)!}e^{-(\lambda_0+\gamma)u}\,du.
    \end{aligned}
  \end{multline*}
  The inequality~\eqref{eq:prop-A-1} follows.

  We then deduce from~\eqref{eq:prop-A-1} that
  \begin{align*}
    \left\|\frac{\mu A^n}{\mu A^n\mathbbm{1}}-\alpha\right\|_{TV} & \leq\frac{1}{\mu A^n\mathbbm{1}}\left[\left\|\mu
        A^n-\mu(\eta)\lambda_0^{-n}\alpha\right\|_{TV}+\left|\mu A^n\mathbbm{1}-\mu(\eta)\lambda_0^{-n}\right|\right] \\
    & \leq\frac{2(CC''+C')}{(\lambda_0+\gamma)^n\mu A^n\mathbbm{1}}.
  \end{align*}
  Now, it follows from~\eqref{eq:eta-better-bound} that
  \begin{align}
    \mu A^n \mathbbm{1} & \geq\int_{t_1}^\infty \frac{u^{n-1}}{(n-1)!}\mu P_u\mathbbm{1}\,du \notag \\
    & \geq\frac{\mu(\eta)}{2}\int_{t_1}^\infty \frac{u^{n-1}}{(n-1)!}e^{-\lambda_0 u}\,du \notag \\
    & =\frac{\mu(\eta)e^{-\lambda_0
        t_1}}{2}\left(\frac{t_1^{n-1}}{\lambda_0(n-1)!}+\frac{t_1^{n-2}}{\lambda_0^2(n-2)!}+\ldots+\frac{1}{\lambda_0^n}\right) \notag \\
    & \geq \frac{\mu(\eta)e^{-\lambda_0 t_1}}{2\lambda_0^n}. \label{eq:proof-prop-A-2}
  \end{align}
  Combining the last two inequalities entails~\eqref{eq:prop-A-2}.

  Similarly, for all $t\geq 0$, $f\in\mathcal{B}_b(E)$ and $\mu\in\mathcal{M}_1(E)$, we deduce from~\eqref{eq:prop-A-1} that
  \begin{multline*}
    \left| \frac{\mu e^{tA}f}{\mu e^{tA}\mathbbm{1}}-\alpha(f)\right| \\
    \begin{aligned}
      & \leq \frac{1}{\mu e^{tA}\mathbbm{1}}\sum_{n\geq 0}\frac{t^n}{n!}\left[\left|\mu A^n
          f-\mu(\eta)\lambda_0^{-n}\alpha(f)\right|+\alpha(f)\left|\mu(\eta)\lambda_0^{-n}-\mu A^n\mathbbm{1}\right|\right] \\
      & \leq \frac{2(CC''+C')\|f\|_\infty}{\mu e^{tA}\mathbbm{1}} e^{\frac{t}{\lambda_0+\gamma}}.
    \end{aligned}
  \end{multline*}
  Now, it follows from~\eqref{eq:proof-prop-A-2} that
  \begin{align*}
    \mu e^{tA}\mathbbm{1} & \geq \frac{\mu(\eta)e^{-\lambda_0 t_1}}{2}\,e^{\frac{t}{\lambda_0}}.
  \end{align*}
  The last two inequalities entail~\eqref{eq:prop-A-3}.
\end{proof}

\subsection{Properties of a measure-valued dynamical system}
\label{sec:dyn-syst}

We begin with the following proposition, which ensures that $A$ is regularizing.
In particular, for all $f\in C_b(D,\R)$ (which denotes the set of
bounded continuous functions from $D$ to $\R$),
$x\in D\mapsto \delta_x Af$ is in $C_b(D,\R)$. This Feller property implies that
$\nu\mapsto \nu A$ is continuous with respect to the weak topology on
the set $\cM(D)$ of non-negative measures with finite mass on
$D$. Similarly, one deduces that
$(t,\nu)\in[0,+\infty)\times \cM(D)\mapsto \nu e^{tA}\in\cM(D)$ is
continuous.

\begin{prop}
  \label{prop:propLip}
  For all bounded measurable functions $f:D\rightarrow\R$, the
  application $x\mapsto \delta_x Af$ is Lipschitz continuous, with
  Lipschitz norm proportional to $\|f\|_\infty$.
\end{prop}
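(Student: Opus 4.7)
The plan is to recognize $u(x) := \delta_x Af = \EE_x\bigl[\int_0^{\tau_\d}f(X_s)\,ds\bigr]$ as the unique bounded strong solution to an elliptic Dirichlet problem, and then to invoke classical $L^p$-elliptic regularity to extract Lipschitz continuity with the correct dependence on $\|f\|_\infty$.

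Let $L = \frac{1}{2}\,\mathrm{tr}(\sigma\sigma^*\nabla^2\,\cdot\,)+b\cdot\nabla$ denote the generator of $X$. First I would apply a Feynman--Kac/It\^o argument (together with~\eqref{eq:eta-borne-unif}, which already gives the a priori bound $\|u\|_\infty \leq C''\|f\|_\infty/\lambda_0$) to identify $u$ as the strong solution of
\[
Lu = -f \ \text{in}\ D, \qquad u = 0 \ \text{on}\ \partial D.
\]

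Since $\sigma$ is continuous, $\sigma\sigma^*$ is uniformly elliptic, $b\in L^\infty(D)$, $\partial D$ is of class $C^2$, and $f\in L^\infty(D)\subset L^p(D)$ for every $p<\infty$, the Calder\'on--Zygmund $L^p$-estimates for non-divergence elliptic equations with continuous coefficients (Gilbarg--Trudinger, Ch.~9) yield, for each finite $p$,
\[
\|u\|_{W^{2,p}(D)} \leq C_p \|f\|_{L^\infty(D)}.
\]
Choosing $p>d$ and invoking the Sobolev embedding $W^{2,p}(D) \hookrightarrow C^{1,1-d/p}(\overline{D})$ then produces $\|\nabla u\|_{L^\infty(D)} \leq C\|f\|_\infty$, which is precisely the Lipschitz estimate required, with constant proportional to $\|f\|_\infty$.

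The principal obstacle is justifying that $u$ is a genuine \emph{strong} $W^{2,p}$ solution of the PDE, given that $\sigma$ is only continuous (not H\"older), so that Schauder theory is not directly applicable. The standard resolution is an approximation argument: mollify the coefficients into smooth $\sigma_n,b_n$ preserving uniform ellipticity and a common modulus of continuity; solve the classical problem $L_n u_n = -f$ with $u_n\in C^{2,\alpha}(\overline{D})$ (now via Schauder); identify $u_n$ with the Feynman--Kac expectation for the approximating diffusion via It\^o's formula; pass to the limit $u_n\to u$ pointwise using the stability of solutions of~\eqref{eq:SDE} under the Priola--Wang condition~\eqref{eq:hyp-sigma-priola-wang}; and conclude by passing to the limit in the Calder\'on--Zygmund bounds, whose constants depend only on the ellipticity constant and on the common modulus of continuity of the $\sigma_n$ and are therefore uniform in $n$.
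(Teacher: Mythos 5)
Your approach is genuinely different from the paper's. The paper invokes the semigroup gradient estimate of Priola--Wang (2006), $|\delta_x P_t f - \delta_y P_t f|\leq \frac{C_{Lip}}{1\wedge\sqrt t}\,\|f\|_\infty\,|x-y|$, applies it at time $1$ to the function $P_t f$ together with the uniform decay~\eqref{eq:eta-borne-unif}, and then integrates in $t$ over $(0,\infty)$: the $t^{-1/2}$ singularity near $0$ is integrable and the exponential tail is controlled. This is short, stays entirely on the probabilistic/semigroup side, and uses the hypothesis~\eqref{eq:hyp-sigma-priola-wang} exactly as it was designed to be used. Your elliptic route is legitimate and in a sense proves more ($W^{2,p}$, hence $C^{1,\alpha}$, regularity of $Af$), but at the cost of a longer chain of justifications, in particular the rigorous identification of the Feynman--Kac expectation with a strong PDE solution, which the paper sidesteps entirely.

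One concrete step in your sketch does not work as stated: you propose solving $L_n u_n = -f$ with $u_n\in C^{2,\alpha}(\overline{D})$ via Schauder theory after mollifying the coefficients, but Schauder estimates require $f\in C^\alpha$, whereas here $f$ is merely bounded measurable, so this step fails. You would need to mollify $f$ as well (and then pass to the limit twice), or---more cleanly---drop the mollification of coefficients entirely: Gilbarg--Trudinger Theorem~9.15 already gives unique solvability in $W^{2,p}\cap W^{1,p}_0$ for the original operator (continuous $\sigma\sigma^*$, bounded $b$, $c=0$, $C^{1,1}$ boundary, $f\in L^p$) with $\|u\|_{W^{2,p}}\leq C_p\|f\|_{L^p}\leq C_p'\|f\|_\infty$. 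The identification of this strong solution with $\delta_\cdot A f$ then goes through the It\^o--Krylov formula for $W^{2,p}$ functions with $p\geq d$, using that the solution is continuous on $\overline{D}$, vanishes on $\partial D$, and that $\sup_x\EE_x[\tau_\d]<\infty$ by~\eqref{eq:eta-borne-unif}. Also note that the pointwise convergence $u_n\to u$ you invoke should be justified by the absolute continuity of the Green measure $\EE_x[\int_0^{\tau_\d}\delta_{X_s}ds]$ with respect to Lebesgue measure (which holds since the diffusion is uniformly non-degenerate), rather than by ``stability of solutions of~\eqref{eq:SDE}'' alone, since $f$ is only measurable. With these repairs your strategy does yield the proposition.
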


\begin{proof}
  From Priola and Wang~\cite{PriolaWang2006}, one deduces that there
  exists a constant $C_{Lip}>0$ which does not depend on $f$ such that, for
  all $t>0$ and all $x,y\in D$,
  \[
    |\delta_x P_t f-\delta_y P_t f|\leq \frac{C_{Lip}}{1\wedge \sqrt{t}} \|f\|_\infty.
  \]
  Applying this inequality to $x\mapsto \delta_x P_t f$ at time $1$
  and using inequality~\eqref{eq:eta-borne-unif}, one deduces that
  \[
    |\delta_x P_{t+1} f - \delta_y P_{t+1} f|\leq C_{Lip} \|P_t f\|_\infty \leq C_{Lip} C''\|f\|_\infty e^{-\lambda_0 t}.
  \]
  As a consequence,
  \[
    |\delta_x Af-\delta_y Af|\leq |x-y|\int_0^1 \frac{C_{Lip}}{\sqrt{t}} \|f\|_\infty\,dt+ |x-y|\int_1^\infty  C_{Lip} C''\|f\|_\infty e^{-\lambda_0 (t-1)}\,dt,
  \]
  which concludes the proof of Proposition~\ref{prop:propLip}.
\end{proof}

The following proposition states the uniqueness of the
evolution equation satisfied by the continuous process
$(\nu e^{tA}/\nu e^{tA}\11_D)_{t\geq 0}$.

\begin{prop}
  \label{prop:propTheFlow}
  For each probability measure $\nu$ on $D$, the equation
\begin{align}
  \label{eq:eqTheFlow}
  \frac{d\varphi_t}{dt}=F(\varphi_t),\quad\varphi_0=\nu,
\end{align}
where $F$ is a measure valued function defined, for all non-negative
finite measures $\nu$ on $D$ by
\begin{equation}
\label{eq:def-F}
F(\nu)=\nu A-(\nu A\11_D)\,\nu,
\end{equation}
admits a unique weak solution in $C([0,+\infty),\cM(D))$, where $\cM(D)$ is equiped with the weak topology, in the sense that,
for all bounded continuous function $f:D\rightarrow\mathbb{R}$ and all $t\geq 0$,
\[
\varphi_t(f)=\varphi_0(f)+\int_0^tF(\varphi_s)(f)\,ds.
\]
In addition, this unique weak solution takes its values in $\mathcal{M}_1(D)$ and is given by $\varphi_t=\nu e^{tA}/\nu e^{tA}\11_D$.
\end{prop}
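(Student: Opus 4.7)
My plan is first to verify by direct computation that $\varphi_t:=\nu e^{tA}/(\nu e^{tA}\11_D)$ solves the equation, and then to establish uniqueness via a solution-dependent linearization. Write $\psi_t=\nu e^{tA}$ and $m_t=\psi_t(\11_D)$. Since $A$ is bounded on $(\mathcal{B}_b(D),\|\cdot\|_\infty)$, the map $t\mapsto\psi_t(f)$ is $C^1$ with derivative $\psi_t(Af)$ for every $f\in\mathcal{B}_b(D)$. Every $A^n$ preserves positivity, so $m_t\ge\psi_0(\11_D)=1>0$, and $\varphi_t$ is a well-defined probability measure. The quotient rule then yields $\frac{d}{dt}\varphi_t(f)=\varphi_t(Af)-\varphi_t(f)\,\varphi_tA\11_D=F(\varphi_t)(f)$, which gives the integral equation after integration.

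\textbf{Mass preservation and linearization of an arbitrary weak solution.} Let now $\varphi\in C([0,+\infty),\cM(D))$ be any weak solution, and set $m_t:=\varphi_t(\11_D)$, $\lambda_t:=\varphi_tA\11_D$. Testing the integral equation against the continuous function $\11_D$ gives $m_t=1+\int_0^t(1-m_s)\lambda_s\,ds$. Because $A\11_D\in C_b(D)$ (by Proposition~\ref{prop:propLip}) and $\varphi$ is weakly continuous, $\lambda_s$ is locally bounded; multiplying the a.e.\ identity $(1-m_t)'=-(1-m_t)\lambda_t$ by the integrating factor $\exp(\int_0^t\lambda_s\,ds)$ forces $m_t\equiv 1$, so in particular $\lambda_t\le\|A\|$ for all $t\ge 0$. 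Setting $u_t:=\exp(\int_0^t\lambda_s\,ds)\le e^{t\|A\|}$ and $\tilde\varphi_t:=u_t\varphi_t$, and using that Proposition~\ref{prop:propLip} also gives $Af\in C_b(D)$ whenever $f\in C_b(D)$, the product rule applied to the absolutely continuous function $t\mapsto\varphi_t(f)$ (with a.e.\ derivative $\varphi_t(Af)-\lambda_t\varphi_t(f)$) produces
\[
\tilde\varphi_t(f)=\nu(f)+\int_0^t\tilde\varphi_s(Af)\,ds,\qquad\forall f\in C_b(D),\ t\ge 0.
\]

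\textbf{Uniqueness of the linear equation and conclusion.} Two solutions of this linear equation with the same initial datum have difference $\delta_t$ satisfying $\delta_t(f)=\int_0^t\delta_s(Af)\,ds$. Since $D$ is Polish, $\|\delta_t\|_{TV}$ coincides with the supremum of $|\delta_t(f)|$ over $f\in C_b(D)$ with $\|f\|_\infty\le 1$, so $\|Af\|_\infty\le\|A\|\,\|f\|_\infty$ yields $\|\delta_t\|_{TV}\le\|A\|\int_0^t\|\delta_s\|_{TV}\,ds$; combined with the a priori bound $\|\delta_s\|_{TV}\le u_s^{(1)}+u_s^{(2)}\le 2e^{s\|A\|}$, Gronwall's lemma gives $\delta\equiv 0$. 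Hence $\tilde\varphi_t=\nu e^{tA}$, and testing against $\11_D$ identifies $u_t=\nu e^{tA}\11_D$, so $\varphi_t=\nu e^{tA}/(\nu e^{tA}\11_D)$, which in particular lies in $\mathcal{M}_1(D)$. The main delicate point is the linearization step: one has to justify that the solution-dependent factor $u_t$ is well controlled and that the product rule legitimately applies to $u_t\varphi_t(f)$, which is precisely what the a priori bound $\lambda_t\le\|A\|$ and the absolute continuity extracted from the integral equation provide.
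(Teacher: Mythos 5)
Your argument is correct and follows essentially the same route as the paper: verify the candidate solution directly, linearize an arbitrary weak solution via the multiplicative factor $\exp\bigl(\int_0^t\varphi_s(A\11_D)\,ds\bigr)$, and invoke uniqueness for the linear equation $\frac{d\tilde\varphi_t}{dt}=\tilde\varphi_t A$ through a Gronwall estimate on the total variation norm. The only substantive difference is that you make the mass-conservation step $\varphi_t(\11_D)\equiv 1$ explicit, which the paper leaves implicit when passing from $\tilde\varphi_t/\tilde\varphi_t\11_D$ back to $\varphi_t$.
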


\begin{proof}
  The fact that $(\nu e^{tA}/\nu e^{tA}\11_D)_{t\geq 0}$
  satisfies~\eqref{eq:eqTheFlow} is immediate. Let us check that this equation has no other solution. In order to do so, we
  consider one of its solutions $(\varphi_t)_{t\geq 0}$ and introduce
  the measure valued process defined by
  \[
    \tilde{\varphi}_t=\exp\left(\int_0^t \varphi_s(A\11_D)\,ds\right)\varphi_t,\quad \forall t\geq 0.
  \]
  This process is weak solution to the linear evolution equation
  \[
    \frac{\d \tilde{\varphi}_t}{\d t}=\tilde\varphi_t A,\ \tilde\varphi_0=\nu,
  \]
  whose unique weak solution is $t\mapsto \nu e^{tA}$. {Indeed, let
  $t \mapsto \mu_t, \nu_t$ be two weak solutions to the linear
  equation. Set $|\mu_t - \nu_t| = \sup_f |\mu_t f - \nu_t f|$ where
  the supremum is taken over the set of continuous function
  $f : D \mapsto \RR$ such that $\|f\|_\infty \leq 1$. Then
  $t \mapsto |\mu_t - \nu_t|$ is lower semicontinuous, hence
  measurable, as a supremum of continuous functions. Thus, by
  Gronwall's lemma (measurable version, see~\cite{ethier-kurtz-86}),
  $|\mu_t - \nu_t| \leq |\mu_0 - \nu_0| e^{\|A\| t}.$ This proves
  uniqueness.}

  As a
  consequence, for all $t\geq 0$,
  \[
    \varphi_t=\frac{\tilde{\varphi}_t}{\tilde{\varphi}_t\11_D}= \frac{\nu e^{tA}}{\nu e^{tA}\11_D},
  \]
  which concludes the proof of Proposition~\ref{prop:propTheFlow}.
\end{proof}

\section{Proof of Theorem~\ref{thm:main-d>1}}
\label{sec:proof}

The general idea of the proof is inspired from~\cite{Benaim1999} and consists in proving that a time-change of the sequence of probability measures
\begin{equation}
\label{def:eta-n}
  \eta_n=\frac{1}{n}\sum_{i=1}^n \delta_{Z_i},\quad\text{where}\quad Z_i:=Y_{\theta_i}
\end{equation}
is an asymptotic pseudo-trajectory (see~\cite{Benaim1999} for the definition of an asymptotic pseudo-trajectory) of a measure-valued
dynamical system related to the normalized semigroup $\frac{\nu e^{tA}}{\nu e^{tA}\11_D}$. The asymptotic properties given in
Proposition~\ref{prop:prop-A} then allow to deduce that $\eta_n$ almost surely converges to $\alpha$. The proof is divided in three
steps. First, 
we prove in Subsection~\ref{sec:SD} tightness properties on the
measure-valued process $(\mu_t)_{t\geq 0}$. The convergence of
$\eta_n$ to $\alpha$ is proved in Subsection~\ref{sec:eta-n}, using a
key lemma on asymptotic pseudo-trajectories properties for $\eta_n$,
proved in
Subsection~\ref{sec:pf-lemma-APT}. Theorem~\ref{thm:main-d>1} is then
be deduced from the convergence of $\eta_n$ using martingale arguments
in Subsection~\ref{sec:end-pf}.

\subsection{Tightness}
\label{sec:SD}

The following proposition entails that the paths of $(\mu_t,t\geq 0)$ are a.s.\ relatively compact in the set of probability measures on $D$ endowed with the weak topology.

\begin{prop}
  \label{prop:tightness1}
  For all $\varepsilon>0$, there exists $\eta>0$ such that, almost surely,
  \begin{align*}
   \liminf_{t\rightarrow+\infty}  \mu_{t}(\{x\in D:d(x,\d
    D)<\eta\}) \leq \varepsilon.
  \end{align*}
\end{prop}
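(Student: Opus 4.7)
The plan is to exploit a uniform estimate on the Green operator $A$ applied to the indicator $f_\eta := \mathbbm{1}_{U_\eta}$ of the neighborhood $U_\eta := \{x\in D:d(x,\partial D)<\eta\}$, combined with a martingale strong law on the excursion decomposition of $(Y_t)$. The crucial ingredient is the uniform estimate
\[
g(\eta) := \sup_{x\in D} A f_\eta(x) \xrightarrow[\eta\to 0]{} 0.
\]
To establish this, set $u(x) := \mathbb{E}_x[\tau_\partial] = A\mathbbm{1}_D(x)$, which, by standard elliptic regularity for the Dirichlet problem in a $C^2$ bounded domain under uniform ellipticity, is continuous on $\overline D$ with $u|_{\partial D}=0$. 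Applying the strong Markov property at the first hitting time $\tau^\eta := \inf\{s\geq 0: X_s\in \overline U_\eta\}$ yields $A f_\eta(x) \leq \mathbb{E}_x[u(X_{\tau^\eta})\mathbbm{1}_{\tau^\eta<\tau_\partial}] \leq \sup_{y\in \overline U_\eta} u(y)$, which tends to $0$ as $\eta\to 0$.

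With this uniform bound in hand, the strong Markov property at the resampling times gives $\mathbb{E}[\int_{\theta_{k-1}}^{\theta_k} f_\eta(Y_s)\,ds\mid \mathcal{F}_{\theta_{k-1}}] = \mu_{\theta_{k-1}} A f_\eta \leq g(\eta)$ and $\mathbb{E}[\theta_k-\theta_{k-1}\mid \mathcal{F}_{\theta_{k-1}}] = \mu_{\theta_{k-1}}(u)$. The processes
\[
N_n := \int_0^{\theta_n} f_\eta(Y_s)\,ds - \sum_{k=1}^n \mu_{\theta_{k-1}} A f_\eta, \qquad M_n := \theta_n - \sum_{k=1}^n \mu_{\theta_{k-1}}(u)
\]
are thus $(\mathcal{F}_{\theta_n})$-martingales whose increments have uniformly bounded conditional variance (using $\sup_x \mathbb{E}_x[\tau_\partial^2]<\infty$, which follows from~\eqref{eq:eta-borne-unif}). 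A martingale strong law then gives $N_n/n, M_n/n \to 0$ a.s., whence $\limsup_{n\to\infty}\frac{1}{n}\int_0^{\theta_n} f_\eta(Y_s)\,ds \leq g(\eta)$ a.s.\ and $\frac{\theta_n}{n}-\frac{1}{n}\sum_{k=1}^n \mu_{\theta_{k-1}}(u)\to 0$ a.s.

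To conclude, dividing yields $\mu_{\theta_n}(f_\eta) = \bigl(\tfrac{1}{n}\int_0^{\theta_n} f_\eta(Y_s)\,ds\bigr)\big/\bigl(\tfrac{\theta_n}{n}\bigr)$. Provided $\liminf_n \theta_n/n \geq c > 0$ along some (random) subsequence, one obtains $\liminf_n\mu_{\theta_n}(f_\eta)\leq g(\eta)/c$, which is $\leq\varepsilon$ for $\eta$ small enough; the extension from $t=\theta_n$ to general $t\in[\theta_n,\theta_{n+1})$ is immediate since the contribution of a single excursion to $\mu_t(f_\eta)$ is $O((\theta_{n+1}-\theta_n)/\theta_n)$, which is negligible. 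The main obstacle is the a.s.\ positivity of $\liminf_n\theta_n/n$, equivalently the non-degeneracy of the occupation measures $\mu_{\theta_n}$ at the boundary: the intuition is that the first excursion almost surely spends positive Lebesgue time at a strictly positive distance from $\partial D$ (by ellipticity and path continuity, using $Y_0\in D$ a.s.), and this mass persists in $\mu_{\theta_n}$ through the time-averaging; making this effective along a suitable subsequence is the delicate step, and is precisely why the statement asserts a $\liminf$ rather than a $\limsup$ bound.
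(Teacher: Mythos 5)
Your Green-function strategy is genuinely different from the paper's and the first half of it is correct: using continuity of $u=A\mathbbm{1}_D$ on $\overline D$ with $u|_{\partial D}=0$ (elliptic regularity for the Dirichlet problem in a $C^2$ domain), and the strong Markov property at the hitting time of $\overline U_\eta$, you do get $\sup_x Af_\eta(x)\le \sup_{\overline U_\eta}u\to 0$. The martingale estimates then give, a.s., $\frac1n\int_0^{\theta_n}f_\eta(Y_s)\,ds\le g(\eta)+o(1)$ and $\theta_n/n-\frac1n\sum_k\mu_{\theta_{k-1}}(u)\to 0$. All of this is sound.

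The gap, however, is exactly the one you flag at the end, and it is not a small loose end: you need $\limsup_n\theta_n/n>0$ a.s.\ (equivalently, a subsequence along which $\theta_n/n\ge c>0$) to turn your bound on $\frac1n\int_0^{\theta_n}f_\eta(Y_s)\,ds$ into a bound on $\mu_{\theta_n}(f_\eta)=\big(\frac1n\int_0^{\theta_n}f_\eta\big)/\big(\theta_n/n\big)$. But within your framework this is circular: $\theta_n/n$ being bounded away from $0$ is essentially the non-degeneracy of $\mu_{\theta_n}$ near the boundary, which is the statement you are trying to prove, and it is precisely what the paper's Proposition~\ref{prop:tightness2} deduces \emph{from} Proposition~\ref{prop:tightness1} afterwards. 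The heuristic you offer (the first excursion's occupation away from the boundary ``persists'') only yields $\theta_n\mu_{\theta_n}(\{d(\cdot,\partial D)\ge\eta_0\})\ge c_1>0$, i.e.\ $\mu_{\theta_n}(\{d(\cdot,\partial D)\ge\eta_0\})\gtrsim 1/\theta_n$, which vanishes since $\theta_n\to\infty$ and therefore does not give the needed lower bound.

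The paper breaks this circularity by a completely different mechanism: it applies It\^o's formula to $\phi_D(Y_t)$, performs a time change to normalize the martingale part, and then couples the resulting one-dimensional process pathwise with an \emph{autonomous} reflected drifted Brownian motion $\bar Z$ on $[0,a]$ so that $\bar Z_t\le\phi_D(Y_{\tau(t)})$ for all $t$. The crucial point is that the resampling jumps of $Y$ push $\phi_D(Y)$ upward (from $0$ to a positive value), so the one-sided comparison survives the self-interaction; $\bar Z$ is then ergodic with an explicit stationary law $m$, and choosing $\eta$ with $m(0,\eta)<\varepsilon$ gives the conclusion directly. That autonomous comparison is what replaces the missing $\limsup\theta_n/n>0$ input in your approach. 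As it stands, your proposal does not constitute a complete proof; you would need an independent argument for the positivity of $\limsup_n\theta_n/n$, and it is hard to see how to get one without something like the paper's coupling.
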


\begin{proof}
  Let $\phi_D:D\rightarrow\RR_+$ be the distance to $\d D$. There exists a neighborhood $\mathcal{N}$ of $\d D$ in $D$ where $\phi_D$
  is $C^2_b$ so that we can apply It\^o's formula: for all $t\geq 0$ such that $Y_t\in\mathcal{N}$,
  \[
  d\phi_D(Y_t)=\left(\sigma(Y_t)^*\nabla \phi_D(Y_t)\right)\cdot dB_t+\nabla \phi_D(Y_t)\cdot
  b(Y_t)dt+\frac{1}{2}\text{Tr}(\sigma(Y_t)^*D^2\phi_D(Y_t)\sigma(Y_t))dt.
  \]
  We introduce the random time-change $\tau(t)$ such that
  \[
  \int_0^{\tau(t)}\left(\11_{Y_s\in\mathcal{N}}\left\|\sigma(Y_s)^*\nabla \phi_D(Y_s)\right\|^2_2
    +\11_{Y_s\not\in\mathcal{N}}\right)ds=t
  \]
  and we observe that there exist constants $0<c_0<C_0<\infty$ such that $c_0\leq\tau'(t)\leq C_0$ for all $t\geq 0$. Then, there exists a
  Brownian motion $W$ such that the process $Z_t:=\phi_D(Y_{\tau(t)})$ satisfies
  \[
  d Z_t=dW_t+H_t dt,\quad\forall t\text{ s.t.\ }Z_t\in\mathcal{N},
  \]
  where the process $H$ is progressively measurable and bounded by a constant $\bar{H}>0$.

  We introduce $a>0$ such that $\{x\in D:d(x,\d D)\leq 2a\}\subset\mathcal{N}$ and the reflected drifted Brownian motion $(\bar{Z}_t,t\geq
  0)$ solution to
  \[
  d\bar{Z}_t=d W_t-\bar{H}dt+dL^0_t-dL^a_t,\quad\forall t\geq 0
  \]
  and such that $\bar{Z}_0=\phi_D(Y_0)\wedge a$, where $L^x_t$ is the local time of $\bar{Z}$ at $x$ at time $t$.

  Since the jumps of $Z$ are positive, one can prove following~\cite[Prop. 2.2]{Villemonais2011} that $\bar{Z}_t\leq Z_t$
  a.s.\ for all $t\geq 0$. Moreover, the process $\bar{Z}$ is ergodic and satisfies almost surely
  \[
  \frac{1}{t}\int_0^t \delta_{\bar{Z}_s}ds\xrightarrow[t\rightarrow+\infty]{} m,
  \]
  where $m(dx)=C e^{-2\bar{H} x}\mathbbm{1}_{[0,a]}(x)\,dx$ is the stationary distribution of $\bar{Z}$ on $[0,a]$.

  Now, for all $\varepsilon>0$, there exists $\eta>0$ such that
  $m(0,\eta)<\varepsilon$. Hence, almost surely for all $t$ large
  enough
  \begin{align*}
    \mu_{\tau(t)}(\{x\in D:d(x,\d
    D)<\eta\}) &
    \leq\frac{1}{\tau(t)}\int_0^{\tau(t)}\11_{\bar{Z}_{\tau^{-1}(s)}<\eta}ds\leq\frac{1}{\tau(t)}\int_0^{t}\11_{\bar{Z}_u<\eta}\tau'(u)du \\ & \leq
    \frac{C_0}{c_0 t}\int_0^t \11_{\bar{Z}_u<\eta}du\leq\frac{C_0\varepsilon}{c_0}.
  \end{align*}
  Since $\tau:\R_+\rightarrow\R_+$ is continuous and $\tau'(t)\geq c_0$
  for all $t\geq 0$, this concludes the proof of
  Proposition~\ref{prop:tightness1}.
\end{proof}

The previous proposition entails that, for all $\varepsilon>0$, there
exists $\eta>0$ such that, almost surely,
$\theta_n\mu_{\theta_n}(\{x\in D:d(x,\d D)\geq \eta\})\geq
(1-2\varepsilon) \theta_n$ for $n$ large enough. The following
proposition is of a slightly different nature and it will be used
later in order to prove that there exists a constant $c>0$ such that,
almost surely,
$\theta_n\mu_{\theta_n}(\{x\in D:d(x,\d D)\geq \eta\})\geq cn$ for $n$
large enough.
\begin{prop}
  \label{prop:tightness2} For all $\varepsilon>0$, there exists $\eta>0$ such that, almost surely, one has
  \begin{equation}
    \label{eq:prop-tight-2a}
    \liminf_{n\rightarrow+\infty} \frac{1}{n}\sum_{i=1}^n \11_{d(Y_{\theta_i},\d D)\geq \eta}\geq 1-2\varepsilon
  \end{equation}
  and
  \begin{equation}
    \label{eq:prop-tight-2b}
    \liminf_{n\rightarrow+\infty} \frac{\theta_n}{n}\geq (1-2\varepsilon)c_0\mathbb{E}(\bar{T}_0),
  \end{equation}
  where $\bar{T}_0=\inf\{t\geq 0,\bar{Z}_t=0\}$, $\bar{Z}_0=\eta$ and the constant $c_0$ and the process $\bar{Z}$ were defined
  in the proof of Proposition~\ref{prop:tightness1}.
\end{prop}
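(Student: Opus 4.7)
My plan is to combine an excursion-by-excursion application of the construction from the proof of Proposition~\ref{prop:tightness1} with two martingale strong law arguments.

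For part~\eqref{eq:prop-tight-2a}, the first step is to notice that $Z_i=Y_{\theta_i}=X^{(i+1)}_0$ has conditional distribution $\mu_{\theta_i}$ given $\sigma(X^{(1)},\ldots,X^{(i)})$, so that
\[
M_n:=\sum_{i=1}^n\bigl(\mathbbm{1}_{d(Z_i,\partial D)<\eta}-\mu_{\theta_i}(\{x\in D:d(x,\partial D)<\eta\})\bigr)
\]
is a martingale with increments bounded by $2$, hence $M_n/n\to 0$ almost surely by the martingale strong law. Next, I would argue that the proof of Proposition~\ref{prop:tightness1} in fact yields the stronger conclusion $\limsup_{t\to+\infty}\mu_t(\{x:d(x,\partial D)<\eta\})\leq C_0\varepsilon/c_0$ a.s.\ (since there $\tau$ is a continuous bijection of $\RR_+$ with $\tau'\geq c_0$, so any $s$ large enough equals $\tau(t)$ for some $t$ large). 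Combined with $\theta_i\to+\infty$ a.s.\ and a Cesàro argument, this would give~\eqref{eq:prop-tight-2a} after a harmless relabeling of $\varepsilon$.

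For part~\eqref{eq:prop-tight-2b}, the plan is to repeat the construction of Proposition~\ref{prop:tightness1} inside each excursion $X^{(i)}$, driven by that excursion's Brownian motion (which is independent of $\mathcal{F}_{\theta_{i-1}}$), yielding a reflected drifted Brownian motion $\bar{Z}^{(i)}$ on $[0,a]$ started from $\phi_D(Z_{i-1})\wedge a$ and coupled to $X^{(i)}$ so that $\theta_i-\theta_{i-1}\geq c_0\bar{T}^{(i)}$ where $\bar{T}^{(i)}:=\inf\{t\geq 0:\bar{Z}^{(i)}_t=0\}$. On $A_i:=\{d(Z_{i-1},\partial D)\geq\eta\}$ (taking $\eta\leq a$), a monotone coupling with a reflected drifted BM started at $\eta$ and driven by the same Brownian motion produces an independent copy $\tilde{T}^{(i)}$ of $\bar{T}_0$, independent of $\mathcal{F}_{\theta_{i-1}}$, with $\bar{T}^{(i)}\geq\tilde{T}^{(i)}$. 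A second martingale strong law applied to the $L^2$-bounded martingale $\sum_i\mathbbm{1}_{A_i}(\tilde{T}^{(i)}-\E[\bar{T}_0])$ (using that $\bar{T}_0$ has exponential moments as the hitting time of $0$ by a drifted reflected BM on $[0,a]$) then gives
\[
\liminf_n\frac{\theta_n}{n}\geq c_0\,\E[\bar{T}_0]\,\liminf_n\frac{1}{n}\sum_{i=1}^n\mathbbm{1}_{A_i},
\]
and an index-shifted version of~\eqref{eq:prop-tight-2a} converts this into~\eqref{eq:prop-tight-2b}.

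The two steps I expect to require the most care are (i) making explicit the $\limsup$ strengthening of Proposition~\ref{prop:tightness1} (implicit in its proof but only stated as a $\liminf$); and (ii) constructing the monotone coupling producing the $\tilde{T}^{(i)}$ with the right joint distribution, namely i.i.d.\ copies of $\bar{T}_0$ each independent of $\mathcal{F}_{\theta_{i-1}}$ and dominated by $\bar{T}^{(i)}$. The latter should follow from standard Skorokhod-map monotonicity on $[0,a]$ applied to solutions driven by the same Brownian motion as $\bar{Z}^{(i)}$, but the bookkeeping tying $\mathcal{F}_{\theta_{i-1}}$-measurability of $A_i$ to the independence of $\tilde{T}^{(i)}$ from the past must be handled carefully.
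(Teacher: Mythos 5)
Your proposal is correct and follows essentially the same route as the paper: both parts rest on the fact that $Z_i$ has conditional law $\mu_{\theta_i}$, on the excursion-wise coupling with the reflected drifted Brownian motion $\bar{Z}$ from the proof of Proposition~\ref{prop:tightness1} (giving $\theta_i-\theta_{i-1}\geq c_0\tilde{T}^{(i)}$ on the good events, with $\tilde{T}^{(i)}$ i.i.d.\ copies of $\bar{T}_0$ independent of the past), and on a martingale strong law of large numbers. The only minor difference is that you handle the random ``for $n$ large enough'' issue through the $\limsup$ strengthening of Proposition~\ref{prop:tightness1} plus a Ces\`aro argument, whereas the paper uses the stopped sequences $Z^k_n$ and the indices $\upsilon_k$ before applying the law of large numbers for submartingales --- both arguments rely on the same strengthening of Proposition~\ref{prop:tightness1} (implicit in its proof) that you correctly flag.
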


\begin{proof}
  Fix $\varepsilon>0$. From Proposition~\ref{prop:tightness1}, there
  exists $\eta>0$ such that, almost surely,
  $\mu_{\theta_n}(\{x\in D:d(x,\d D)\geq \eta\})\geq
  1-2\varepsilon$ for $n$ large enough.  For all
  $k\in\N=\{1,2,\ldots\}$, we define the random  variable in $\mathbb{N}\cup\{+\infty\}$
  \[
    \upsilon_k=\inf\Big\{n\geq k,\,\mu_{\theta_n}(\{x\in D:d(x,\d D)\geq \eta\})<1-2\varepsilon\Big\},
  \]
  so that $\P(\cup_{k=1}^\infty \{\upsilon_k=+\infty\})=1$.
  We also define the sequence of points $(Z^k_n)_{n\geq 0}$ in $D$ by
  \[
    Z^k_n=\begin{cases}
      Y_{\theta_n}&\text{ if }n<\upsilon_k\\
      x_0&\text{ if }n\geq \upsilon_k,
      \end{cases}
  \]
  where $x_0$ is an arbirary point in $\{x\in D:d(x,\d D)\geq
  \eta\}$. 
By definition, the law of $Y_{\theta_n}=X_0^{(n+1)}$ conditionally to 
 $\mu_{\theta_1},\ldots,\mu_{\theta_n}$ 
 and $Y_{\theta_0},Y_{\theta_1},\ldots,Y_{\theta_{n-1}}$ 
 is
  $\mu_{\theta_n}$. Moreover, $\{n<\upsilon_k\}$ is measurable with
  respect to $\mu_{\theta_1},\ldots,\mu_{\theta_n}$, and hence, for all
  $n\geq k$, (we denote by $\P^n$ the probability conditionally to
 $\mu_{\theta_1},\ldots,\mu_{\theta_n}$
  and $Y_{\theta_1},\ldots,Y_{\theta_{n-1}}$)
  \begin{align*}
    \P^n(d(Z^k_n,\d D)\geq \eta)&= \P^n(d(Y_{\theta_n},\d D)\geq \eta)\11_{n<\upsilon_k}+\11_{n\leq\upsilon_k}\\
                              &\geq \mu_{\theta_n}(\{x\in D:d(x,\d D)\geq \eta\})\11_{n<\upsilon_k}+\11_{n\leq\upsilon_k}\geq 1-2\varepsilon.
  \end{align*}
  Using the law of large numbers for submartingales, this implies that, almost surely and for all $k\geq 1$,
  \[
    \liminf_{n\rightarrow+\infty} \frac{1}{n}\sum_{i=1}^n \11_{d(Z^k_i,\d D)\geq \eta}\geq 1-2\varepsilon.
  \]
  Observing that, almost surely, there exists $k\geq 1$ such that
  $Z^k_n=Y_{\theta_n}$ for all $n\geq 1$, this concludes the proof of~\eqref{eq:prop-tight-2a}.

  To prove~\eqref{eq:prop-tight-2b}, we observe that, due to the coupling argument of the proof of Proposition~\ref{prop:tightness1},
  \[
  \theta_n\geq\sum_{i=1}^n \11_{d(Z^k_i,\d D)\geq \eta}\bar{T}_0^{(i)},
  \]
  where $(\bar{T}_0^{(i)})_{i\geq 1}$ are i.i.d.\ copies of $\bar{T}_0$ such that $\bar{T}_0^{(i)}$ is independent of
  $Z^k_1,\ldots,Z^k_i$ for all $i\geq 1$. Therefore, we can use the law of large numbers for submartingales as above to conclude the
  proof of Proposition~\ref{prop:tightness2}.
\end{proof}

\subsection{Study of the empirical measure of the resampling points}
\label{sec:eta-n}

In this subsection, we focus on the behaviour of the random sequence
of measures $(\eta_n)_{n\geq 1}$ defined in~\eqref{def:eta-n}.
Our aim is to prove the following proposition using the theory of
pseudo-asymptotic trajectories.

\begin{prop}
  \label{prop:propSamplingPoints}
  The sequence of probability measures $(\eta_n)_{n\in\N}$ converges
  almost surely to $\alpha$ with respect to the weak topology.
\end{prop}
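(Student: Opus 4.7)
The plan is to apply the stochastic approximation machinery of~\cite{Benaim1999,BenaiemHirsch1996} by exhibiting (a time change of) $(\eta_n)_{n\ge 1}$ as an asymptotic pseudo-trajectory of a measure-valued semiflow for which $\alpha$ is a global attractor. The natural recursion is
\[
\eta_{n+1}-\eta_n=\frac{1}{n+1}\bigl(\delta_{Z_{n+1}}-\eta_n\bigr),
\]
and, conditionally on the past before time $\theta_n$, the new point $Z_{n+1}$ is distributed according to $\mu_{\theta_n}$. The crucial observation is that $\mu_{\theta_n}$ should be close to $\eta_n A/(\eta_n A\11_D)$: writing
\[
\theta_n\mu_{\theta_n}=\sum_{i=0}^{n-1}\int_{\theta_i}^{\theta_{i+1}}\delta_{Y_s}\,ds,
\]
each summand has conditional expectation $\delta_{Z_i}A$ given the past before $\theta_i$, while $\theta_{i+1}-\theta_i$ has conditional expectation $\delta_{Z_i}A\11_D=\EE_{Z_i}[\tau_\d]$; up to the negligible initial excursion, this gives $\mu_{\theta_n}\approx \eta_n A/(\eta_n A\11_D)$.

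With this in hand, I would decompose the recursion as a perturbed Euler scheme
\[
\eta_{n+1}=\eta_n+\frac{1}{n+1}\bigl(\tilde F(\eta_n)+U_{n+1}+r_n\bigr),
\]
where $\tilde F(\nu):=\nu A/(\nu A\11_D)-\nu$, $U_{n+1}:=\delta_{Z_{n+1}}-\mu_{\theta_n}$ is a martingale increment in $\cM(D)$ (tested against bounded continuous functions), and the remainder $r_n:=\mu_{\theta_n}-\eta_n A/(\eta_n A\11_D)$ has to be shown to vanish almost surely. Introducing the logarithmic time change $t_n:=\sum_{k=1}^{n}1/(k+1)$, the general theory will then give that the piecewise-constant interpolation $t\mapsto\eta_{n(t)}$ is an asymptotic pseudo-trajectory of the semiflow generated by $\tilde F$, provided the martingale noise $U_n$, uniformly bounded in total variation by $2$, contributes only a summable second-moment correction (standard for step size $1/n$), and provided $r_n\to 0$ almost surely in the weak topology. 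The latter is precisely the key lemma referred to at the beginning of this section.

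Now $\tilde F$ differs from the measure-valued vector field $F$ of~\eqref{eq:def-F} only by the positive factor $1/(\nu A\11_D)$, so the two semiflows share the same orbits up to a time reparametrization; by Proposition~\ref{prop:propTheFlow} the semiflow of $\tilde F$ through any $\nu$ with $\nu(\eta)>0$ is of the form $t\mapsto \nu e^{s(t)A}/(\nu e^{s(t)A}\11_D)$ for some increasing $s(t)\to\infty$. Estimate~\eqref{eq:prop-A-3} then yields uniform convergence of these trajectories to $\alpha$ on sets where $\nu(\eta)$ is bounded below by a positive constant. Combining this with the tightness of $(\eta_n)$ in the weak topology (Proposition~\ref{prop:tightness1}) and a uniform lower bound on $\eta_n A\11_D=\eta_n(\EE_\cdot[\tau_\d])$ that follows from~\eqref{eq:prop-tight-2a} (since $x\mapsto\EE_x[\tau_\d]$ is bounded below on any set of the form $\{d(\cdot,\d D)\ge \delta\}$ with $\delta>0$), one concludes that the $\omega$-limit set of the pseudo-trajectory is the singleton $\{\alpha\}$, which gives the claimed weak convergence $\eta_n\to\alpha$ almost surely.

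The main obstacle will be the quantitative control of $r_n$. It requires simultaneously proving concentration of the signed measure $\sum_{i=0}^{n-1}\bigl(\int_{\theta_i}^{\theta_{i+1}}\delta_{Y_s}\,ds-\delta_{Z_i}A\bigr)$ tested against Lipschitz functions, and the corresponding scalar concentration of $\theta_n-\sum_{i=0}^{n-1}\delta_{Z_i}A\11_D$. Both rely on martingale bounds for differences of centered excursion functionals, where the Lipschitz regularization provided by the Green operator (Proposition~\ref{prop:propLip}) is essential to reduce convergence in the weak topology to the control of countably many sufficiently smooth test functions, and the tightness results of Subsection~\ref{sec:SD} are used in an intertwined manner to guarantee that the normalizing denominator $\eta_n A\11_D$ stays uniformly bounded below throughout the argument.
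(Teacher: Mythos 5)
Your proposal is correct in outline, but it follows a genuinely different decomposition than the paper. The paper absorbs the normalization $\eta_n A\11_D$ into a variable step size $\gamma_{n+1}=1/[(n+1)\eta_n A\11_D]$, so that the drift term becomes the \emph{quadratic} field $F(\nu)=\nu A-(\nu A\11_D)\nu$ and the entire perturbation $U_{n+1}=(\eta_n A\11_D)\delta_{Z_{n+1}}-\eta_n A$ is lumped into a single noise term which is \emph{not} a martingale increment; Lemma~\ref{lem:martingale} then performs a Renlund-type decomposition into the martingale part $N_\ell-\E_{\ell-1}N_\ell$ and a predictable part, and shows by an $L^1$ estimate that the accumulated series $\sum_\ell\gamma_\ell U_\ell f$ converges. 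You instead keep the canonical step size $1/(n+1)$ and work with the nonlinear field $\tilde F(\nu)=\nu A/(\nu A\11_D)-\nu$, splitting the perturbation into a genuine martingale increment $\delta_{Z_{n+1}}-\mu_{\theta_n}$ (bounded, hence a.s.\ summable against $1/n$ by standard $L^2$ arguments) and a separate remainder $r_n=\mu_{\theta_n}-\eta_n A/(\eta_n A\11_D)$ to be shown to vanish a.s.\ pointwise. Both ideas are sound: you correctly identify that $\tilde F$ and $F$ share orbits (and hence attractors) up to time reparametrization, so Proposition~\ref{prop:propTheFlow} and estimate~\eqref{eq:prop-A-3} still identify $\{\alpha\}$ as a global attractor. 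The one place where your outline leaves real work to do is the a.s.\ statement $r_n\to 0$: this does \emph{not} follow from the $L^2$ bound $\E|r_n|^2=O(1/n)$ alone (not summable), but it does follow from the martingale SLLN estimates $\theta_{n+1}\mu_{\theta_{n+1}}f/n-\eta_n Af\to 0$ and $\theta_{n+1}/n-\eta_n A\11_D\to 0$ used in Subsection~\ref{sec:end-pf} of the paper, combined with the lower bound $\eta_n A\11_D\ge c>0$ from Proposition~\ref{prop:tightness2} and the fact that $\eta_{n+1}-\eta_n=O(1/n)$; filling in this last step makes your route a complete alternative proof. Each route buys something: the paper's variable-step linearization keeps the vector field quadratic but forces a more delicate analysis of the non-martingale noise, while your decomposition makes the noise a clean martingale at the cost of needing the time-reparametrization argument and a strong a.s.\ statement on the remainder.
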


\begin{proof}
We follow an approach inspired from~\cite{Benaim1999}. Let
$(\tau_n)_{n\geq 1}$ be defined as $\tau_1=0$ and
$\tau_n=\gamma_2+\gamma_3+\cdots+\gamma_n$ for $n\geq 2$, where
\[
 \gamma_{n+1}=\frac{1}{(n+1)\eta_{n} A\11_D},\quad\forall n\geq 1.
\]
We consider the linearly interpolated version $(\widetilde\eta_t)_{t\in[1,+\infty)}$ of $(\eta_n)_{n\in\N}$ defined,
for all $n\geq 0$ and all $t\in \left[\tau_n,\tau_{n+1}\right]$, by
\begin{align*}
    \widetilde\eta_t=\eta_n+\frac{t-\tau_n}{\tau_{n+1}-\tau_n}(\eta_{n+1}-\eta_n),
\end{align*}
where we define by convention $\eta_0=\delta_{x_0}$ for some fixed $x_0\in D$.

Let $(f_k)_{k\in\N}$ be a sequence of bounded continuous functions
from $D$ to $\R$ such that the metric
\[
  d(\nu_1,\nu_2)=\sum_{k=0}^{\infty} \frac{|\nu_1 f_k -\nu_2
    f_k|
  }{2^k \|f_k\|_\infty},
\]
metrizes the weak topology on measures on $D$.

The main point of the proof consists in using \cite[Theorem~3.2]{Benaim1999} to prove that $\widetilde\eta$ is an asymptotic
pseudo-trajectory of~\eqref{eq:eqTheFlow}. By Proposition~\ref{prop:propTheFlow}, this means in our setting that, for all $T>0$,
  \begin{align}
    \label{eq:def-APT}
    \lim_{t\rightarrow +\infty} \sup_{s\in[0,T]} d\left(\widetilde\eta_{t+s},\,\frac{\widetilde\eta_t e^{sA}}{\widetilde\eta_t e^{sA}\11_D}\right)=0.
  \end{align}
This is stated in the next lemma, proved in  the next subsection.

\begin{lem}
  \label{lem:APT}
  The measure-valued process $\widetilde\eta$ is almost surely an asymptotic pseudo-trajectory for the distance $d$ on the set of
  probability measures on $D$ of the semi-flow induced by~\eqref{eq:eqTheFlow} and defined in Proposition~\ref{prop:propTheFlow}.
\end{lem}

Once this is proved, Proposition~\ref{prop:propSamplingPoints} follows
easily: indeed $(\widetilde\eta_t)_{t\geq 0}$ is almost surely a
relatively compact asymptotic pseudo-trajectory of the semi-flow
induced by~\eqref{eq:eqTheFlow} for which $\{\alpha\}$ is a compact
global attractor, which implies the result (see for
instance~\cite[Corollary~5.3]{BenaimCloezEtAl2017} and
\cite{Benaim1999,BenaiemHirsch1996}).


\end{proof}

\subsection{Proof of Lemma~\ref{lem:APT}}
\label{sec:pf-lemma-APT}

For all $n\geq 1$, we have
\[\eta_{n+1}-\eta_n = \frac{\delta_{Z_{n+1}}-\eta_n}{n+1} = \gamma_{n+1}\Big(F(\eta_n) + U_{n+1}\Big),\]
where, recalling the definition of $A$ in~\eqref{eq:A} and of $F$ in~\eqref{eq:def-F},
\[
  \gamma_{n+1}=\frac{1}{(n+1)\eta_n A\11_D}\quad\text{and}\quad U_{n+1}=(\eta_n A\11_D)\delta_{Z_{n+1}}-\eta_n A.
\]

Fix $\varepsilon\in(0,1/4)$ and $\eta>0$ small enough so that the conclusions of Proposition~\ref{prop:tightness2} hold true. Setting
$c:=\inf_{x\in D,\ d(x,\d D)>\eta} \frac{\delta_x A\11_D}{2}\wedge \frac{c_0\mathbb{E}\bar{T}_0}{2}$, which is positive by
Proposition~\ref{prop:propLip}, we define for all $k\geq 1$ the random variable in $\mathbb{N}\cup\{+\infty\}$
\[
  \sigma_k=\inf\{n\geq k,\ \eta_n A\11_D\leq c\text{ or }\theta_n\leq cn\}.
\]
The conclusion of Proposition~\ref{prop:tightness2} entails that
$\P(\cup_{k=1}^\infty \{\sigma_k=+\infty\})=1$.

Following~\cite{Benaim1999}, before proving Lemma~\ref{lem:APT}, we begin by proving the next lemma.
\begin{lem}
  \label{lem:martingale}
  Almost surely, for all bounded measurable function
  $f:D\rightarrow\R$, the numeric sequence 
  $\left(\sum_{\ell=1}^{n} \gamma_{\ell} U_{\ell}f\right)_n$ admits a
  finite limit  when $n\rightarrow+\infty$.
\end{lem}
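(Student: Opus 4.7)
The plan is to decompose each $\gamma_\ell U_\ell f$ into a martingale-difference part plus a small drift, after localizing to the event $\{\sigma_k = +\infty\}$ on which $\eta_{\ell-1} A \11_D \geq c$ and $\theta_\ell \geq c\ell$; this localization yields $\gamma_\ell \leq 1/(c\ell)$ and controls all denominators appearing below. Since $\P\bigl(\bigcup_k \{\sigma_k = +\infty\}\bigr) = 1$, establishing a.s.\ convergence on each such event suffices.

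Using $\gamma_\ell U_\ell f = \frac{1}{\ell}\bigl[f(Z_\ell) - \eta_{\ell-1} A f / \eta_{\ell-1} A \11_D\bigr]$, I would insert $\mu_{\theta_\ell} f$ as a pivot and write
\[
\gamma_\ell U_\ell f = \frac{f(Z_\ell) - \mu_{\theta_\ell} f}{\ell} + \frac{1}{\ell}\Bigl[\mu_{\theta_\ell} f - \frac{\eta_{\ell-1} A f}{\eta_{\ell-1} A \11_D}\Bigr].
\]
Let $(\mathcal{F}_n)$ denote the natural filtration generated by $(Y_s)_{s \leq \theta_n}$. Because $Z_\ell$ is drawn from $\mu_{\theta_\ell}$ given everything preceding the resampling, $\Delta_\ell := f(Z_\ell) - \mu_{\theta_\ell} f$ is an $(\mathcal{F}_\ell)$-martingale difference bounded by $2\|f\|_\infty$, and the series $\sum_\ell \Delta_\ell/\ell$ is a bounded-in-$L^2$ martingale, hence converges almost surely.

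The harder step is the drift term. Setting $T_i := \theta_{i+1}-\theta_i$ and $V_i := \int_0^{T_i} f(X^{(i+1)}_s)\,ds$, and using the convention $Z_0 := Y_0$, I would use the decomposition
\[
\theta_\ell \mu_{\theta_\ell} f = Af(Z_0) + (\ell-1)\eta_{\ell-1} A f + W_\ell,\quad \theta_\ell = A\11_D(Z_0) + (\ell-1)\eta_{\ell-1} A \11_D + W'_\ell,
\]
where $W_\ell := \sum_{i=0}^{\ell-1}\bigl(V_i - Af(Z_i)\bigr)$ and $W'_\ell := \sum_{i=0}^{\ell-1}\bigl(T_i - A\11_D(Z_i)\bigr)$ are $(\mathcal{F}_i)$-martingales. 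Thanks to~\eqref{eq:eta-borne-unif}, $T_i$ has uniformly bounded conditional second moment, whence $\E[W_\ell^2], \E[(W'_\ell)^2] = O(\ell)$ and so $\E|W_\ell|, \E|W'_\ell| = O(\sqrt{\ell})$. On $\{\sigma_k = +\infty\}$ for $\ell > k$, the algebraic identity
\[
\mu_{\theta_\ell} f - \frac{\eta_{\ell-1} A f}{\eta_{\ell-1} A \11_D} = \frac{[Af(Z_0) + W_\ell]\,\eta_{\ell-1} A \11_D - [A\11_D(Z_0) + W'_\ell]\,\eta_{\ell-1} A f}{\theta_\ell\,\eta_{\ell-1} A \11_D}
\]
has denominator bounded below by $c^2 \ell$ and numerator bounded by $C(1 + |W_\ell| + |W'_\ell|)$. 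Hence $\sum_\ell (1 + |W_\ell| + |W'_\ell|)/\ell^2$ has finite expectation (using $\sum_\ell \ell^{-3/2} < \infty$) and is therefore a.s.\ finite by Fubini, giving absolute convergence of the drift part on $\{\sigma_k = +\infty\}$.

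The main obstacle is precisely that $U_\ell$ is \emph{not} itself a martingale difference: $Z_{n+1}$ is sampled from the time-weighted occupation measure $\mu_{\theta_{n+1}}$ rather than from a distribution depending only on $\eta_n$, so the discrepancy between $\mu_{\theta_\ell}$ and $\eta_{\ell-1} A / \eta_{\ell-1} A\11_D$ must be controlled quantitatively. Pivoting through $\mu_{\theta_\ell} f$ cleanly separates the pure resampling noise (centered by construction) from this slower, $L^2$-controlled drift between the time-weighted and point-weighted empirical measures. Combining both contributions and taking the union over $k$ then concludes.
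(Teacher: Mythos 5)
Your proposal is correct and follows essentially the same strategy as the paper: localize via $\sigma_k$, split $\gamma_\ell U_\ell f$ into the martingale-difference part $\frac{1}{\ell}\bigl(f(Z_\ell)-\mu_{\theta_\ell}f\bigr)$ (an $L^2$-bounded martingale, hence a.s.\ convergent) and the drift part $\frac{1}{\ell}\bigl(\mu_{\theta_\ell}f-\eta_{\ell-1}Af/\eta_{\ell-1}A\11_D\bigr)$, and show the latter has expected magnitude $O(\ell^{-3/2})$ on the localized event using the auxiliary martingales $\theta_\ell\mu_{\theta_\ell}f - (\ell-1)\eta_{\ell-1}Af$ and $\theta_\ell-(\ell-1)\eta_{\ell-1}A\11_D$, which are exactly your $W_\ell$, $W'_\ell$. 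The only cosmetic difference is that the paper splits the drift into two terms and stops the martingale at $\sigma_k$, whereas you bound it through a single algebraic identity and argue directly on $\{\sigma_k=+\infty\}$; both work.
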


\begin{proof}
  For all $\ell \geq 0$, we introduce $\mathcal{G}_\ell$ the $\sigma$-field generated by 
  $\mu_{\theta_1},\ldots,\mu_{\theta_{\ell+1}}$, $\theta_1,\ldots,\theta_{\ell+1}$ and
  $Z_1,\ldots,Z_{\ell}$. 
  Fix $k\geq 1$. We start by observing that
  \[
  \{\ell\leq\sigma_k\}=\Big\{\forall n\in\{k,k+1,\ldots,\ell-1\}, \eta_n A\11_D>c\text{ and }\theta_n>c n\Big\}\in\mathcal{G}_{\ell-1},
  \]
  so that $\sigma_k$ is predictable with respect to the filtration $(\mathcal{G}_\ell)_{\ell\geq 0}$.

  Following~\cite[Lemma~1]{Renlund2010}, we define $N_\ell = \gamma_\ell U_\ell f$ and
  \[
  M^{(k)}_n = \sum_{\ell=1}^{n\wedge\sigma_k} (N_\ell -
  \E_{\ell-1}N_\ell),
  \]
  where $\E_{\ell-1}$ denotes the expectation conditionally to $\mathcal{G}_{\ell-1}$. Observe $M^{(k)}_n$ is a martingale with
  respect to $(\mathcal{G}_\ell)_{\ell\geq 0}$ and that
  \[
  N_\ell=\frac{1}{\ell}\left(f(Z_\ell)-\frac{\eta_{\ell-1}Af}{\eta_{\ell-1}A\11_D}\right)\quad\text{and}\quad
  \E_{\ell-1}N_\ell=\frac{1}{\ell}\left(\mu_{\theta_\ell}f-\frac{\eta_{\ell-1}Af}{\eta_{\ell-1}A\11_D}\right).
  \]
  We have, for all $n\geq 0$
  \begin{align*}
    \mathbb E |M^{(k)}_n|^2
    &=  \sum_{\ell=1}^n \mathbb E\left[\big|N_\ell - \E_{\ell-1}N_\ell\big|^2\11_{\ell\leq \sigma_k}\right]\\
    &\leq 2\sum_{\ell=1}^n
    \E\left[|N_\ell|^2+|\E_{\ell-1} N_\ell|^2\right]\leq  4\sum_{\ell=1}^n \frac{\|f\|_\infty^2}{\ell^2}.
  \end{align*}
  As a consequence, the martingale $(M^{(k)}_n)_{n\geq 0}$ is uniformly
  bounded in $L^2$ and hence converges almost surely. Let us now prove 
  that $\sum_{\ell=1}^{n\wedge\sigma_k}\E_{\ell-1}N_\ell$ converges
  almost surely when $n\rightarrow+\infty$.

  We have, for all $\ell\geq 1$,
  \begin{align*}
    \E\big|\E_{\ell-1}[N_\ell]\11_{\ell\leq\sigma_k}\big|
    &=\frac{1}{\ell}\,\E\left|\mu_{\theta_\ell}f\,\11_{\ell\leq\sigma_k}-\frac{\eta_{\ell-1}Af}{\eta_{\ell-1}A\11_D}\11_{\ell\leq\sigma_k}\right|.
  \end{align*}
  For all $\ell< k$, this quantity is almost surely bounded by
  $2\|f\|_\infty/\ell$. For all $\ell\geq k$, the definition of $\sigma_k$ entails that
  \begin{align}
    \E\big|\E_{\ell-1}[N_\ell]\11_{\ell\leq\sigma_k}\big|
    \leq & \frac{1}{\ell}\,\E\left|\left(\frac{1}{\theta_\ell}-\frac{1}{(\ell-1)\eta_{\ell-1}A\11_D}\right)\theta_\ell\mu_{\theta_\ell}f
      \11_{\ell\leq\sigma_k}\right|\label{eq:substep2}\\     & +
      \frac{1}{c\ell(\ell-1)}\,\E\Big[\left|\theta_\ell \mu_{\theta_\ell}f - (\ell-1)\eta_{\ell-1}Af\right|\11_{\ell\leq\sigma_k}\Big]\label{eq:substep1}.
  \end{align}
  We first consider the term in~\eqref{eq:substep1}. It follows from the fact that $(\theta_{\ell+1}
  \mu_{\theta_{\ell+1}}f - \ell\eta_{\ell}Af)_{\ell\geq 0}$ is a $(\mathcal{G}_\ell)_{\ell\geq 0}$-martingale and from Cauchy-Schwarz
  inequality that
  \begin{align}
    \E\Big[|\theta_\ell \mu_{\theta_\ell}f - (\ell- & 1)\eta_{\ell-1}Af|\11_{\ell\leq\sigma_k}\Big]^2\nonumber\\
    &\leq \E\Big[\left|\theta_{\ell} \mu_{\theta_{\ell}}f - (\ell-1)\eta_{\ell-1}Af\right|^2\Big]\nonumber\\
    &= \sum_{i=1}^{\ell} \E\Big[\left|\int_0^{\tau_\d^{(i)}}f(X^{(i)}_s)ds - \delta_{Z_{i-1}}Af\right|^2\Big]\nonumber\\
    &\leq 2\|f\|_\infty^2 \sum_{i=1}^{\ell} \E((\tau^{(i)}_\d)^2)\leq 2 \|f\|_\infty^2 \ell \sup_{x\in D}\E_x(\tau_\d^2)\label{eq:substep1forsubstep2},
  \end{align}
  where $\sup_{x\in D} \E_x(\tau_\d^2)<+\infty$ since,   by~\eqref{eq:eta-borne-unif},
  \[
  \E_x(\tau_\d^2)=2\E_x\left(\int_0^{+\infty}t\, \11_{t<\tau_\d}\,dt\right)\leq 2\int_0^{+\infty}t\,\P_x(t<\tau_\d)\,dt\leq 2\int_0^{+\infty}t\,C''e^{-\lambda_0 t}\,dt.
  \]
  Consider now the term in~\eqref{eq:substep2}.
 \begin{multline*}
   \E\left|\left(\frac{1}{\theta_\ell}-\frac{1}{(\ell-1)\eta_{\ell-1}A\11_D}\right)\theta_\ell\mu_{\theta_\ell}f \11_{\ell\leq\sigma_k}\right|^2\\
   \begin{aligned}
   &\leq \|f\|_\infty^2\,\E\left|\left(1-\frac{\theta_\ell}{(\ell-1)\eta_{\ell-1}A\11_D}\right)^2 \11_{\ell\leq\sigma_k}\right|\\
   &\leq \frac{\|f\|_\infty^2}{c^2(\ell-1)^2}\,\E\left|\left((\ell-1)\eta_{\ell-1}A\11_D-\theta_\ell\right)^2 \11_{\ell\leq\sigma_k}\right|\\
   &\leq \frac{\|f\|_\infty^2}{c^2(\ell-1)^2}\,2 \ell \sup_{x\in D}\E_x(\tau_\d^2),
   \end{aligned}
 \end{multline*}
 where we used~\eqref{eq:substep1forsubstep2} with $f=\11_D$ to obtain the last inequality.

 We deduce that $\E|\E_{\ell-1}(N_\ell\11_{\ell\leq \sigma_k})|$ is ${\cal O}(\ell^{-3/2})$ (beware that the $\cal O$ may depend on
 $k$), so that
 $\E|\sum_{\ell=1}^{n\wedge\sigma_k}\E_{\ell-1}N_\ell|<+\infty$ and
 hence that $\sum_{\ell=1}^{n\wedge\sigma_k}\E_{\ell-1}N_\ell<\infty$ almost surely.

 Because of the a.s. convergence of the sequence $(M^{(k)}_n)_{n\in\N}$, we conclude
 that $(\sum_{\ell=1}^{n\wedge\sigma_k} N_\ell)_{n\in\N}$ converges
 almost surely when $n\rightarrow+\infty$ for all $k\geq 1$. Since, almost surely, there
 exists $k\geq 1$ such that $\sigma_k=+\infty$, this concludes the
 proof of Lemma~\ref{lem:martingale}.
\end{proof}

\begin{proof}[Proof of Lemma~\ref{lem:APT}]
We introduce the time-changed version $(\bar\eta_t)_{t\in[1,+\infty)}$ of the sequence  $(\eta_n)_{n\in\N}$ as $\bar\eta_t=\eta_n$ for all $n\geq 1$ and all
$t\in \left[\tau_n,\tau_{n+1}\right)$.
We also define $\bar{U}_t=U_{n+1}$ for all
$t\in\left[\tau_n,\tau_{n+1}\right)$.

To apply \cite[Theorem~3.2]{Benaim1999}, one needs to prove that
$(\widetilde \eta_t)_{t\geq 0}$ is
almost surely
relatively compact, that it is
almost surely
uniformly continuous and that all limit points of
$(\Theta_t(\widetilde\eta))_{t\geq 0}$ in $C(\R_+,\cM(D))$, endowed
with the topology of uniform convergence for the metric $d$ on compact
time inervals, are
almost surely
weak solutions
of~\eqref{eq:eqTheFlow}, where
$\Theta_t(\widetilde\eta):=(\widetilde\eta_{t+s})_{s\geq 0}$.

The fact that $(\widetilde \eta_t)_{t\geq 0}$ is relatively compact is an immediate consequence of Proposition~\ref{prop:tightness2} and
the
almost surely
uniform continuity is also immediately obtained from the construction of $\widetilde\eta$, since for all
$s,t\in[\tau_n,\tau_{n+1}]$,
\begin{align}
  d(\widetilde{\eta}_s,\widetilde{\eta}_t) &
  \leq\sum_{k=0}^\infty\frac{|s-t|}{2^k\,\gamma_{n+1}\,\|f_k\|_\infty}\left|\frac{f_k(Z_{n+1})}{n+1}-\frac{f_k(Z_1)+\ldots+f_k(Z_n)}{n(n+1)}\right|
  \notag \\ &
  \leq \frac{4}{(n+1)\gamma_{n+1}}\,|s-t|
  \label{eq:unif-cont}
\end{align}
and since $\inf_{n\geq 1}(n+1)\gamma_{n+1}>0$
almost surely by Proposition~\ref{prop:tightness2}. 

In order to prove the last point, we adapt the method developed
in~\cite[Proposition~4.1]{Benaim1999}. Assume that there exists an
increasing sequence of positive numbers $(t_n)_{n\geq 0}$ converging
to $+\infty$ such that $(\Theta_{t_n}(\widetilde\eta))_{n\geq 0}$
converges to an element ${\widetilde\eta}^\infty$ in $C(\R_+,\cM(D))$ with
respect to the uniform convergence on compact time intervals. Our aim is to
prove that ${\widetilde\eta}^\infty$ is a weak solution to~\eqref{eq:eqTheFlow}.

For all $f\in C_b(D,\R_+)$, define
$L_F^f:C(\R_+,\cM(D))\rightarrow \R^{[0,+\infty)}$ by
\begin{align*}
  L_F^f(\nu)(t)=\nu_0 f+\int_0^t F(\nu_s) f\,ds,\quad\forall \nu\in C(\R_+,\cM(D)),
\end{align*}
so that, using the equality
$\int_t^{t+s} (F(\bar\eta_u)+\bar
U_u)\,du=-\widetilde\eta_t+\widetilde\eta_{t+s}$,
\begin{equation}\label{eq:Theta_t}
 \Theta_t(\widetilde\eta)f=L_F^f\big(\Theta_t(\widetilde\eta)\big)+A^f_t+B^f_t,
\end{equation}
where, for all $s\geq 0$,
\begin{align*}
A^f_t(s)=\int_t^{t+s} (F(\bar\eta_u)f-F(\widetilde\eta_u)f)\,du\quad\text{and}\quad B^f_t(s)=\int_t^{t+s} \bar{U}_u f\, du.
\end{align*}

For all $u\in[0,+\infty)$, let us denote by $n_u$ the unique
non-negative integer such that $u\in[\tau_{n_u},\tau_{n_u+1})$. Then, proceeding as in~\eqref{eq:unif-cont},
one easily checks that
\[
|\bar\eta_u g-\widetilde\eta_u g|\leq \frac{2\|g\|_\infty}{n_u+1},\quad \forall g\in C_b(D,\R_+).
\]
Since $n_u\rightarrow+\infty$ when $u\rightarrow+\infty$ and since
$F(\nu)f=\nu Af-(\nu A\11_D)\,\nu f$, where $Af$ and $A\11_D$ are
bounded continuous functions, we deduce
that $A^f_t(s)$ converges to $0$ when
$t\to \infty$.

Also, for all $t\in[\tau_n,\tau_{n+1})$ and $t+s\in [\tau_{n+m},\tau_{n+m+1})$, 
\begin{align*}
  |B^f_t(s)|&\leq (\tau_{n+1}-t) |U_{n+1} f| + \left|\sum_{\ell=n+1}^{n+m-1} \gamma_{\ell+1} U_{\ell+1}  f\right| + (s-\tau_{n+m}) \big|U_{n+m+1} f\big|\\
  &\leq \gamma_{n+1} \big|U_{n+1} f\big| + \left|\sum_{\ell=n+1}^{n+m-1} \gamma_{\ell+1} U_{\ell+1}  f\right| + \gamma_{n+m+1} \big|U_{n+m+1} f\big|.
\end{align*}
Hence Lemma~\ref{lem:martingale} implies that $B^f_t(s)$ also goes to $0$ when $t\rightarrow+\infty$.

Finally, since $L_F^f$ is clearly sequentially continuous in
$C([0,+\infty),\cM(D))$, one finally deduces that, for all
$f\in C_b(D,\R_+)$,
\[
  {\widetilde \eta}^\infty_t f={\widetilde\eta}^\infty_0 f+\int_0^t F({\widetilde\eta}^\infty_s)f\,ds,\ \forall t\geq 0,
\]
which means that ${\widetilde \eta}^\infty$ is a weak solution
to~\eqref{eq:eqTheFlow} and hence, by \cite[Theorem~3.2]{Benaim1999},
that $\widetilde\eta$ is an asymptotic pseudo-trajectory of the flow
induced by~\eqref{eq:eqTheFlow}.
\end{proof}

\subsection{Proof of Theorem~\ref{thm:main-d>1}}
\label{sec:end-pf}


Fix  any bounded measurable functions $f:D\rightarrow\R$.  For all $n\geq 1$, we set
\[
\Psi_n=\theta_{(n+1)}\mu_{\theta_{(n+1)}}  f -n\eta_{n} A f.
\]
The random sequence $(\Psi_n)_{n\geq 1}$ is a
$(\mathcal{G}_\ell)_{\ell\geq 0}$-martingale and
  \[
    \Psi_{n} = \sum_{i=1}^{n} \int_{0}^{\tau_\d^{(i+1)}} f(X^{(i+1)}_s)ds - \delta_{Z_i}A f.
  \]
  This martingale property implies that
  \begin{align*}
    \frac{\E\left[|\Psi_n|^2\right]}{n}
    &\leq \frac{1}{n} \sum_{i=1}^n \mathbb E\left[\left| \int_{0}^{\tau_\d^{(i+1)}} f(X^{(i+1)}_s)ds - \delta_{Z_i}A f\right|^2\right]\\
    &\leq  2\|f\|^2_\infty\sup_{x\in D}\E_x(\tau_\d^2).
  \end{align*}
  From \cite[Theorem~1.3.17]{Duflo1997}, we deduce that $n^{-1}\Psi_n$
  goes almost surely to zero when $n$ goes to infinity,
  that is
  \[
  \frac{\theta_{(n+1)}\mu_{\theta_{n+1}}  f}{n} -\eta_{n} A f\xrightarrow[n\rightarrow+\infty]{a.s.} 0.
  \]
 Since $Af$ is continuous and bounded for any bounded measurable
 function $f$ (see Proposition~\ref{prop:propLip}),
 one deduces from Proposition~\ref{prop:propSamplingPoints} that, almost surely,
 \[
   \frac{\theta_{n}\mu_{\theta_n}  f}{n} \xrightarrow[n\rightarrow+\infty]{} \alpha Af =\alpha f/\lambda_0.
 \]
 Applying this result to $f=\11_D$, one deduces that $\theta_n/n$
 converges to $1/\lambda_0$ almost surely and hence that
 $\mu_{\theta_n}f$ converges to $\alpha f$ almost surely. Since, for all $t\in[\theta_n,\theta_{n+1})$,
\begin{align*}
\left|\mu_tf-\mu_{\theta_n}f\right|&\leq
\frac{1}{t}\left|\int_{\theta_n}^t f(Y_s)\,ds\right|+\left|\int_0^{\theta_n} f(Y_s)\,ds\right|\left(\frac{1}{t}-\frac{1}{\theta_n}\right)\\
&\leq 
\frac{\|f\|_\infty}{t}(t-\theta_n)+\frac{(t-\theta_n)}{t\,\theta_n}\,\theta_n\,\|f\|_\infty\\
&\leq 2\|f\|_\infty\left(1-\frac{\theta_n}{\theta_{n+1}}\right),
\end{align*}
the almost sure convergence of $\mu_t$ to $\alpha f$ when $t\rightarrow+\infty$ follows from the almost sure convergence of
$\theta_n/n$ to the positive constant $1/\lambda_0$.

\bibliographystyle{abbrv} \bibliography{biblio}

\end{document}